 \newtheorem{theorem}{Theorem}[section]
\newtheorem{proposition}[theorem]{Proposition}
\newtheorem{lemma}[theorem]{Lemma}
\newtheorem{corollary}[theorem]{Corollary}
\theoremstyle{definition}
\def\P{\mathbb{P}}
\def\Q{\mathbb{Q}} 
\def\R{\mathbb{R}}
\def\Z{\mathbb{Z}}
\def\O{\mathcal{O}}
\def\<{\langle}
\def\>{\rangle}
\DeclareMathOperator{\vol}{vol}
\DeclareMathOperator{\Vol}{Vol}
\begin{document}

\title[Cayley decompositions of lattice polytopes]{Cayley decompositions of lattice polytopes and upper bounds for $h^*$-polynomials}           

\author{Christian Haase}
\address{Inst. f\"ur Mathematik, Arnimallee 3, 14195 Berlin, Germany}
\email{christian.haase@math.fu-berline.de}

\author{Benjamin Nill}
\address{Inst. f\"ur Mathematik, Arnimallee 3, 14195 Berlin, Germany}
\email{nill@math.fu-berlin.de}
    
\author{Sam Payne}
\address{Stanford University, Mathematics, Bldg. 380, 450 Serra Mall, Stanford, CA 94305}
\email{spayne@stanford.edu}

\begin{abstract}
We give an effective upper bound on the $h^*$-polynomial of a lattice polytope in terms of its degree and leading coefficient, confirming a conjecture of Batyrev.  We deduce this bound as a consequence of a strong Cayley decomposition theorem which says, roughly speaking, that any lattice polytope with a large multiple that has no interior lattice points has a nontrivial decomposition as a Cayley sum of polytopes of smaller dimension.  

Polytopes with nontrivial Cayley decompositions correspond to projectivized sums of toric line bundles, and our approach is partially inspired by classification results of Fujita and others in algebraic geometry.  In an appendix, we interpret our Cayley decomposition theorem in terms of adjunction theory for toric varieties.
\end{abstract}
 
\maketitle

\section{Introduction}

Let $P$ be a $n$-dimensional lattice polytope and let $f_P(m)$ be the number of lattice points in $mP$, for nonnegative integers $m$.  Then the Ehrhart series $F_P(t) = \sum_{m \geq 0} f_P(m) t^m$ may be expressed as a rational function
\[
F_P(t) = \frac{h^*_P(t)} {(1-t)^{n+1}},
\]
where $h^*_P(t) = h^*_0 + h^*_1 t + \cdots + h^*_d t^d$ is a polynomial of degree $d \leq n$ with positive integer coefficients. The number $d$ 
is also called the degree of $P$. Recall that $h^*_0 + \cdots + h^*_d = \Vol(P)$ is the normalized volume of $P$, which is $n!$ times the euclidean volume.  See \cite{BeckRobins07} for this and other basic facts about Ehrhart series and $h^*$-polynomials.

In \cite{Batyrev07} Batyrev conjectured that if the degree $d$ of $h^*_P$ and the leading coefficient $h^*_d$ are both fixed, then the normalized volume of $P$, and hence each coefficient $h^*_i$, is bounded.  Since $h^*_d$ is the number of interior lattice points in $(n- d + 1) P$, if the dimension is also fixed then the volume of $P$ is bounded by a result of Hensley \cite{Hensley83}.  However, these bounds grow doubly exponentially with the dimension.

\begin{theorem} \label{main}
Let $P$ be a lattice polytope of degree $d$ such that $h^*_d = k$.  Then $\Vol(P)$ is bounded above by a number that depends only on $d$ and $k$.
\end{theorem}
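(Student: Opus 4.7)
I reformulate the hypothesis in terms of interior lattice points. Writing $n = \dim P$, the degree of $h^*_P$ being $d$ is equivalent, by Ehrhart reciprocity, to the codegree of $P$ being $n - d + 1$, i.e. $(n-d)P$ has no interior lattice points; and $h^*_d$ is then the number of interior lattice points of $(n-d+1)P$. So the hypothesis reads: $(n-d)P$ has no interior lattice points, and $(n-d+1)P$ has exactly $k$ interior lattice points. By Hensley's theorem, the volume of $(n-d+1)P$ is bounded in terms of its dimension $n$ and its number of interior lattice points, so $\Vol(P)$ is bounded by a function of $n$ and $k$. It therefore suffices to bound $n$ in terms of $d$ and $k$ alone.

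To bound $n$, I argue by induction on $n$ (or equivalently on $n - d$). In the base case $n$ is absorbed into Hensley's bound and there is nothing to prove. For the inductive step, assume $n$ is much larger than an explicit threshold $N(d,k)$ to be determined. Then the dilate $(n-d)P$ has no interior lattice points and the multiple $n - d$ is very large, so this is precisely the hypothesis of the strong Cayley decomposition theorem of the paper. The theorem yields a nontrivial decomposition
\[
P \;=\; P_0 * P_1 * \cdots * P_r, \qquad r \geq 1,
\]
with each summand $P_i$ a lattice polytope of dimension strictly less than $n$.

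It remains to exploit the decomposition to bound $\Vol(P)$. A Cayley sum of $r+1$ pieces automatically has codegree at least $r+1$, and for $\ell > r$ the interior lattice points of $\ell P$ correspond to interior lattice points of Minkowski combinations $\sum_i \lambda_i P_i$ with $\lambda_i \geq 1$ and $\sum_i \lambda_i = \ell$. I would use this to transfer the pair $(d, k)$ for $P$ into analogous Ehrhart data controlling each summand $P_i$, apply the inductive hypothesis to bound $\Vol(P_i)$ by a function of $d$ and $k$, and assemble these into a bound on $\Vol(P)$ via a mixed-volume formula for Cayley sums. The main obstacle I anticipate is the bookkeeping in this last step: showing that the degree and leading-coefficient data really do propagate in a controlled way through the Cayley decomposition, so that the induction closes with an explicit and effective bound. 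Granting the strong Cayley decomposition theorem as the central combinatorial input, everything else in the argument reduces to this careful descent.
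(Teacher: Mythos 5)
Your reduction ``it therefore suffices to bound $n$ in terms of $d$ and $k$ alone'' is false, and this is a real obstacle rather than a bookkeeping issue. Taking iterated lattice pyramids over a polytope with degree $d$ and $h^*_d = k$ produces polytopes of that same degree and leading coefficient in every dimension, so $n$ is genuinely unbounded. The theorem's content is precisely that the volume bound can be made \emph{independent} of $n$, so any argument routed through ``first bound $n$, then apply Hensley'' cannot close. The paper never bounds $n$: it bounds the ambient dimension $q$ of the Cayley summands via Theorem~\ref{Cayley}, and then works entirely in $\R^q$.

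Your second step has a gap you yourself flagged, and it is the fatal one. You propose to apply the inductive hypothesis to each Cayley summand $P_i$, but there is no control on the degree or leading coefficient of an individual $P_i$ in terms of the data $(d,k)$ for $P$ --- a summand may even be a single lattice point, and in general its Ehrhart invariants are unrelated to those of $P$ in the needed direction. What the paper does instead is bound the \emph{fibers} of the Cayley projection: for an interior lattice point $\lambda$ of the base simplex, the fiber $\pi^{-1}(\lambda)\cap P$ is the Minkowski sum $\lambda_0 P_0 + \cdots + \lambda_s P_s$ with all $\lambda_j \geq 1$, and these fibers together contain exactly the $k$ interior lattice points of $(n+1-d)P$. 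So each such fiber is a $q$-dimensional lattice polytope (with $q \leq (d^2+19d-4)/2$) having between $1$ and $k$ interior lattice points, and Lagarias--Ziegler gives a volume (indeed width) bound depending only on $q$ and $k$. An integration of the fiber volumes over the base simplex then bounds $\Vol(P)$ outright, with no induction and no appeal to Hensley. If you want to salvage your sketch, replace ``bound each $P_i$'' by ``bound each Minkowski-sum fiber'' and drop the attempt to bound $n$.
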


\noindent The essential content of Theorem \ref{main} is that the bound on $\Vol(P)$ is independent of the dimension.  See Theorem~\ref{volume bound} for an explicit bound.  It follows that, up to lattice pyramid constructions and unimodular equivalence, there are only finitely many lattice polytopes whose $h^*$-polynomials have fixed degree and leading coefficient.  Indeed, a well-known result of Lagarias and Ziegler says that there are only finitely many lattice polytopes of given volume and dimension \cite{LagariasZiegler91}, and Batyrev recently proved that if the degree and normalized volume of $P$ are fixed then $P$ is an iterated lattice pyramid over a polytope of bounded dimension \cite{Batyrev07}.

Our approach to proving Theorem \ref{main} is to classify lattice polytopes of large dimension such that the degree of $h^*_P$ is small.  Recall that $d$ is the largest nonnegative integer such that $(n- d) P$ has no interior lattice points.  For fixed $d$, we describe lattice polytopes $P$ of dimension $n \gg 0$ such that $(n - d)P$ has no interior lattice points in terms of Cayley decompositions, as follows.

Recall that if $P_0$, $P_1, \ldots, P_s$ are lattice polytopes in $\R^q$, then the Cayley sum $P_0 * \cdots * P_s$ is defined to be the convex hull of $(P_0 \times 0) \cup (P_1 \times e_1) \cdots \cup (P_s \times e_s)$ in $\R^q \times \R^s$ for the standard basis 
$e_1, \ldots, e_s$ of $\R^s$.  A Cayley decomposition of $P$ is a $\Z$-affine linear choice of coordinates $\R^n \cong \R^q \times \R^s$ identifying $P$ with the Cayley sum $P_0 *  \cdots * P_s$ for some lattice polytopes $P_0, \ldots, P_s$ in $\R^q$.

Projection to the second factor maps $P_0 * \cdots * P_s$ onto a unimodular $s$-dimensional simplex, whose $s$-fold dilation contains no interior lattice points.  Since interior lattice points project to interior lattice points, it follows that $s  (P_0 * \cdots * P_s)$ contains no interior lattice points.  In particular, if $P$ has a decomposition as a Cayley sum of lattice polytopes in $\R^q$ then the degree of $h^*_P$ is at most $q$.  Batyrev and Nill studied lattice polytopes with multiples with no interior lattice points and asked whether all polytopes of bounded degree and sufficiently high dimension must have nontrivial Cayley decompositions \cite[Question~1.13]{BatyrevNill07}.  The following theorem gives an affirmative answer.

\begin{theorem} \label{Cayley}
Let $P$ be a lattice polytope of degree $d$.  Then $P$ decomposes as a Cayley sum of lattice polytopes in $\R^q$ for some $q \leq (d^2 + 19 d - 4) / 2$.
\end{theorem}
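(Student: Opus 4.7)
The goal is to produce a surjective lattice-affine projection $\pi: P \twoheadrightarrow \Delta_s$ onto the standard unimodular $s$-simplex, since such a projection is equivalent data to a Cayley decomposition $P = P_0 \ast \cdots \ast P_s$ with each $P_i$ lying in a lattice copy of $\R^q$, $q = n - s$. I thus aim to build such a $\pi$ with $s \geq n - (d^2+19d-4)/2$. When $n \leq (d^2+19d-4)/2$ the trivial decomposition ($s = 0$, $P_0 = P$) already suffices, so I may restrict attention to the case $n$ large relative to $d$.

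The starting input is the hollowness of $(n-d)P$ coming from the degree hypothesis. From this I would extract \emph{Cayley directions}, i.e.\ primitive functionals $u_1, \ldots, u_s \in M$ and companion vertices such that the map $p \mapsto (u_1(p), \ldots, u_s(p))$ surjects $P$ onto $\Delta_s$. A single Cayley direction amounts to a primitive $u \in M$ for which $u(P) \subseteq [0,1]$ takes both endpoint values. My plan is first to prove a one-step extraction lemma roughly of the form: \emph{if $P$ has degree $d$ and its dimension exceeds a function of $d$ that is linear in $d$, then $P$ admits at least one Cayley direction $u$}. Then I would iterate, passing at each step to the fiber (or quotient) polytope over a face of $\Delta_s$ and re-applying the extraction; Cayley decompositions with respect to pairwise compatible directions combine into a single projection to a higher-dimensional $\Delta_s$.

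The heart of the argument is quantitative. The quadratic shape of the bound $(d^2+19d-4)/2$ strongly suggests a two-level induction: an inner, one-step extraction whose dimension cost is linear in $d$, and an outer induction that iterates this extraction $O(d)$ times to yield the overall $O(d^2)$ bound. The main obstacle I anticipate is controlling the degree of the fiber and quotient polytopes across the iteration, because the hypothesis we need for the next extraction is a degree statement, and the degree can behave subtly under projection and restriction. To handle this I would exploit the toric dictionary emphasized in the abstract: a Cayley direction on $P$ corresponds, on the associated polarized toric variety $(X,L)$, to a $\P^1$-bundle-type contraction produced by adjunction; more precisely, the nef-value morphism of the adjoint bundle $K_X + (n-d+1)L$, which must fail to be big when the degree is small relative to the dimension. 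Toric analogues of Fujita's classification of polarized varieties with small $\Delta$-genus then give the needed structural control on how $(X,L)$ factors through the contraction, and translating back to polytopes supplies a uniform linear-in-$d$ bound on the fiber dimension at each extraction step. Summing these bounds across the $O(d)$ iterations yields the claimed quadratic estimate on $q$.
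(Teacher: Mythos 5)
Your strategy differs fundamentally from the paper's: you propose an iterative scheme that extracts one Cayley direction at a time and passes to fiber/quotient polytopes, whereas the paper fixes a single $n$-simplex $S \subset P$ through an interior lattice point of $(n-d+1)P$ and constructs \emph{all at once} a face $F$ of $S$ (built as a chain $F_1 \subset F_2 \subset F_3 \subset F$) along which the projection sends $P$ onto a unimodular simplex. The paper's bound comes from Nill's bound on denominators ($\dim F_1 \leq 4d-2$), a bound $|Z| \leq d$ on vertices of $S$ with $b_i(x)=0$, and two greedy/covering arguments controlling negative and large coefficients. No iteration on polytopes of lower dimension is ever performed.

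The gap in your plan is precisely the obstacle you flag and then defer to adjunction theory. First, the ``one-step extraction'' lemma (dimension linear in $d$ forces a Cayley direction) is left unproven, and the quantitative engine you invoke — a Fujita-type classification controlling the nef-value morphism of $K_X + (n-d+1)L$ — simply does not exist for general $d$: Fujita's classification, cited in the appendix, covers only $d \leq 3$ or $4$, and producing a linear-in-$d$ bound here is essentially equivalent in difficulty to the theorem you are trying to prove. Second, the step ``Cayley decompositions with respect to pairwise compatible directions combine'' is not automatic: if $P = Q * P_s$ via a projection $\pi$, and a facet $Q$ has its own Cayley direction $\rho$, there is no reason for $\rho$ to extend to a lattice-affine functional on $\R^n$ taking $P$ into $[0,1]$, nor for the resulting pair $(\pi,\rho)$ to project $P$ onto $\Delta_2$. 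The paper sidesteps both issues by never iterating; it works inside a single fixed simplex $S$ and extracts the entire projection at once, with the quadratic term arising from a $(|Z|^2 + 7|Z|)/2$-type greedy bound rather than from summing $O(d)$ linear contributions. As written, your proposal identifies the right shape of the difficulty but does not supply the combinatorial mechanism that actually resolves it.
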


\noindent In particular, if $n$ is greater than $(d^2 + 19 d - 4)/2$ then $P$ has a nontrivial decomposition as a Cayley sum of at least two polytopes.  In toric geometry, polytopes with nontrivial Cayley decompositions are associated to projectivized sums of nef toric line bundles and interior lattice points of dilations correspond to sections of adjoint bundles.  Theorem~\ref{Cayley} is largely inspired by Fujita's classification results for polarized varieties $(X,L)$ such that $K_X + (n-d)L$ is not nef, for $d \leq 3$, in \cite{Fujita87}; with few exceptions, the toric examples appearing in his classification look like images of projectivized sums of line bundles.  See the Appendix for an interpretation of Theorem~\ref{Cayley} in terms of adjunction theory on toric varieties.

\smallskip

This paper is organized as follows. Section 2 contains the proof of Theorem~\ref{Cayley}. In Section 3 we improve this result in 
the special case of Gorenstein polytopes using their duality. In Section 4 we prove 
an effective version of Theorem~\ref{main}. Finally, an appendix contains in Theorem~\ref{toric} the algebro-geometric version of Theorem~\ref{Cayley}.

\vspace{10 pt}

\noindent \textbf{Acknowledgments.}  Haase and Nill are members of the Research Group Lattice Polytopes, led by Haase and supported by Emmy Noether fellowship HA 4383/1 of the German Research Foundation (DFG).  Nill would like to thank Stanford University for hospitality, and the Clay Mathematics Institute for financial support.  Payne was supported by the Clay Mathematics Institute. We thank Jaron Treutlein for a careful reading of the manuscript.

\section{Existence of Cayley decompositions}

Let $P$ be a $n$-dimensional lattice polytope in $\R^{n+1}$ that is contained in the affine hyperplane $x_{n+1} = 1$.  Let $d$ be the degree of $h^*_P(t)$ and let $x$ be a lattice point in the relative interior of $(n- d +1)P$.  Fix a $n$-dimensional lattice simplex $S \subset P$ such that $x$ is contained in the cone spanned by $S$.  To prove Theorem~\ref{Cayley} we will find a face $F$ of $S$ of dimension at most $(d^2 + 19 d - 4) / 2$ such that projection along the affine span of $F$ maps $P$ onto a unimodular simplex.  Then $P$ can be decomposed as the Cayley sum of the preimages of the vertices of this unimodular simplex.

Let $v_0, \ldots, v_n$ be the vertices of $S$.  Then any point $y$ in $\R^{n+1}$ can be written uniquely as
\[
y = b_0(y) v_0 + \cdots + b_n(y) v_n,
\]
for some real numbers $b_i(y)$.  If $F$ is a face of $S$, then projection along $F$ maps $P$ onto a unimodular simplex if and only if, 
for every lattice point $y \in \Z^{n+1}$, the coefficient $b_i(y)$ is an integer for $v_i$ not in $F$, and, moreover, 
for every vertex $w$ of $P$, the coefficient $b_i(w)$ is either zero or one for $v_i$ not in $F$, and is equal to one for at most one $v_i$ not in $F$.  
We will construct such a face $F$ in four steps, by choosing a chain of faces
\[
F_1 \subset F_2 \subset F_3 \subset F
\]
such that for every lattice point $y \in \Z^{n+1}$, $b_i(y)$ is an integer if $v_i$ is not in $F_1$, and, for every vertex $w$ of $P$, 
$b_i(w)$ is a nonnegative integer if $v_i$ is not in $F_2$, and either zero or one if $v_i$ is not in $F_3$.  We will bound the dimension of $F$ by controlling the increase in dimension at each step in this chain.

\begin{proposition}
Let $F_1$ be the face of $S$ spanned by those vertices $v_i$ such that $b_i(y)$ is not an integer for some lattice point $y \in \Z^{n+1}$.  
Then the dimension of $F_1$ is at most $4 d-2$.
\end{proposition}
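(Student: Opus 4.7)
My plan works within the finite abelian group $\Lambda := \Z^{n+1}/\langle v_0,\ldots,v_n\rangle$, whose elements are represented uniquely by lattice points $y_\lambda = \sum b_i(y_\lambda)\,v_i$ in the half-open parallelepiped $\Pi := \{\sum a_i v_i : 0 \le a_i < 1\}$. I set $\mathrm{supp}(\lambda) := \{i : b_i(y_\lambda) > 0\}$ and $\mathrm{ht}(\lambda) := \sum_i b_i(y_\lambda)$, so that $F_1$ is spanned by $\{v_i : i \in \bigcup_{\lambda\neq 0}\mathrm{supp}(\lambda)\}$. Two basic facts drive the argument: (a) since $S \subseteq P$ and $(n-d)P$ has no interior lattice points, $\deg S \le d$, giving $\mathrm{ht}(\lambda) \le d$ for every nonzero $\lambda$; and (b) the identity $y_\lambda + y_{-\lambda} = \sum_{i\in \mathrm{supp}(\lambda)} v_i$ yields $|\mathrm{supp}(\lambda)| = \mathrm{ht}(\lambda) + \mathrm{ht}(-\lambda) \le 2d$.

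Next I analyze $\lambda_x := [x] \in \Lambda$. After refining the choice of $S$ so that $x$ lies in the relative interior of $(n-d+1)S$, I write $x = \sum c_i v_i$ with all $c_i > 0$. The condition $\sum c_i = n-d+1$ together with $c_i \in \Z_{\ge 1}$ for $i \notin \mathrm{supp}(\lambda_x)$ forces $|\mathrm{supp}(\lambda_x)| \ge \mathrm{ht}(\lambda_x) + d$; combined with (b), this pins down $|\mathrm{supp}(\lambda_x)| = \mathrm{ht}(\lambda_x) + d \le 2d$ and $\mathrm{ht}(-\lambda_x) = d$.

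The key intersection lemma is: for every nonzero $\mu \in \Lambda$, $\mathrm{supp}(\mu) \cap \mathrm{supp}(\lambda_x) \neq \emptyset$. Otherwise $y_\mu + y_{\lambda_x}$ already lies in $\Pi$ and represents $\mu + \lambda_x$, so $|\mathrm{supp}(\mu+\lambda_x)| = |\mathrm{supp}(\mu)| + |\mathrm{supp}(\lambda_x)|$ and $\mathrm{ht}(\mu+\lambda_x) = \mathrm{ht}(\mu) + \mathrm{ht}(\lambda_x)$. Using the strict inequality $|\mathrm{supp}(\mu)| \ge \mathrm{ht}(\mu) + 1$ (valid because every $b_i < 1$) and applying (b) to $\mu + \lambda_x$, one finds $\mathrm{ht}(-(\mu+\lambda_x)) \ge d + 1$, contradicting (a).

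To finish, I aim to produce a single $\mu^* \in \Lambda$ whose outside support $\mathrm{supp}(\mu^*) \setminus \mathrm{supp}(\lambda_x)$ exhausts $F_1 \setminus \mathrm{supp}(\lambda_x)$: then $F_1 \subseteq \mathrm{supp}(\lambda_x) \cup \mathrm{supp}(\mu^*)$, the intersection lemma forces these two supports to overlap, and $|F_1| \le 2d + 2d - 1 = 4d - 1$, i.e., $\dim F_1 \le 4d - 2$. The main obstacle will be the construction of $\mu^*$. The natural candidate is a maximizer of $|\mathrm{supp}(\mu) \setminus \mathrm{supp}(\lambda_x)|$, and one argues that any vertex $j \in \mathrm{supp}(\mu) \setminus (\mathrm{supp}(\lambda_x) \cup \mathrm{supp}(\mu^*))$ would give, through a combination $\mu \pm k\mu^*$ for appropriate $k$, an element with strictly larger outside support, contradicting maximality. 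The delicate point is handling cancellations of the form $\{b_i(y_\mu) + b_i(y_{\mu^*})\} = 0$ in the overlap of the supports, which may require passing to higher multiples of $\mu^*$ so that the rational denominators of the $b_i$ provide enough variation.
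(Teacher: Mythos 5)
The paper does not prove this proposition directly; it simply cites Theorem 1.7 of Nill, so you are attempting to reconstruct a proof from scratch, and much of your groundwork is sound. Facts (a) and (b), the analysis of $\lambda_x$ (after the refinement of $S$), and the intersection lemma all check out. However, the step you yourself flag as "the main obstacle" is a genuine gap, and in fact the intermediate statement you are trying to establish --- that there exists a single $\mu^*$ with $F_1 \subseteq \mathrm{supp}(\lambda_x) \cup \mathrm{supp}(\mu^*)$ --- is \emph{false}. Passing to higher multiples of $\mu^*$ cannot save it, because the "bad" values of $k$ are cosets of subgroups of $\Lambda$, and such cosets can cover the whole group.

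Here is a concrete counterexample. Take $n=6$, $\Lambda = (\Z/2)^3$ with generators $\lambda, \mu_1, \mu_2$, and realize $S$ (with $P=S$) by the homomorphisms $\phi_i := b_i(\,\cdot\,) \bmod 1 : \Lambda \to \tfrac12\Z/\Z$ given in coordinates $(\lambda,\mu_1,\mu_2)$ by $\phi_1=(1,1,1)$, $\phi_2=(1,1,0)$, $\phi_3=(1,0,1)$, $\phi_4=(1,0,0)$, $\phi_5=(0,1,1)$, $\phi_6=(0,1,0)$, $\phi_7=(0,0,1)$. One checks these separate points of $\Lambda$, that every nonzero $\nu$ has $|\mathrm{supp}(\nu)|=4$ and $\mathrm{ht}(\nu)=\mathrm{ht}(-\nu)=2$, so $d=2$, and that $\lambda_x=\lambda$ has $\mathrm{supp}(\lambda_x)=W=\{1,2,3,4\}$ with $|W|=\mathrm{ht}(\lambda_x)+d$. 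All seven nonzero elements satisfy your intersection lemma, and $F_1=\{1,\dots,7\}$ attains the bound $4d-1=7$. But the outside supports $U(\nu)=\mathrm{supp}(\nu)\setminus W$ taken over all nonzero $\nu$ are exactly $\emptyset, \{5,6\}, \{5,7\}, \{6,7\}$ --- no single $\mu^*$ covers $\{5,6,7\}$, and the cancellation $\{b_5(y_{\mu_1})+b_5(y_{\mu_2})\}=0$ is unavoidable because $\mu_1,\mu_2$ are $2$-torsion, so the only "multiples" available are $k\in\{0,1\}$, both bad. Thus the final counting step of your argument collapses, and a genuinely different mechanism (as in Nill's actual proof) is needed to get from the intersection lemma and the bound $|\mathrm{supp}(\nu)|\le 2d$ to a bound on the \emph{union} of all supports.
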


\begin{proof}
This follows from \cite[Theorem~1.7]{Nill07b}.
\end{proof}

Let $\sigma$ be the cone in $\R^{n+1}$ spanned by $P$.  We write $|Z|$ for the cardinality of a finite set $Z$. 

\begin{lemma}
Let $Z$ be the set of vertices $v_i$ in $S$ such that $b_i(x)$ is zero. Then
\[
|Z| \leq d.
\]
\end{lemma}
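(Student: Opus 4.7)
The plan is to apply Stanley's monotonicity theorem to bound the degree of $S$ and then to extract two complementary lattice points in the fundamental half-open parallelepiped $\Pi(S) := \{\sum c_i v_i : c_i \in [0,1)\}$ of $S$. Since $S \subseteq P$ are lattice polytopes of the same dimension $n$, Stanley's monotonicity gives $h^*_S(t) \leq h^*_P(t)$ coefficient-wise, so $\deg S \leq \deg P = d$; equivalently, every lattice point in $\Pi(S)$ has last coordinate (``height'') at most $d$.

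From $x$ I would extract the canonical box reduction $\bar x := x - \sum_i \lfloor b_i(x)\rfloor v_i \in \Pi(S)$, of height $h := (n-d+1) - \sum_i \lfloor b_i(x)\rfloor$. Since $b_i(x) = 0$ for $i \in Z$, the support of $\bar x$ is $I' := \{i : b_i(x) \notin \Z\}$, and $I' \subseteq I := \{i : b_i(x) > 0\}$. The reflected box element $\bar y := \sum_{i \in I'} v_i - \bar x$ also lies in $\Pi(S)$ and has height $|I'| - h$. Applying Stanley to both yields the pair of inequalities $h \leq d$ and $|I'| - h \leq d$, hence $|I'| \leq 2d$.

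To finish, I would split $I = I' \sqcup I_{\mathrm{int}}$ with $I_{\mathrm{int}} := \{i \in I : b_i(x) \in \Z_{\geq 1}\}$, and use the identity $\sum_i b_i(x) = n - d + 1$ together with $|I_{\mathrm{int}}| \leq \sum_{i \in I_{\mathrm{int}}} b_i(x)$. The target inequality $|I| = |I'| + |I_{\mathrm{int}}| \geq n - d + 1$, i.e., $|Z| \leq d$, should then follow by combining the combinatorial identity with the two Stanley height bounds.

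The main obstacle is closing this last step: the Stanley bounds alone give only $|I'| \leq 2d$, which is not enough to bound $|Z|$. Bridging to the sharp bound $|Z| \leq d$ appears to require the hypothesis that $x$ is \emph{interior} to $(n-d+1)P$, and not merely contained in the cone over $S$. Geometrically, $x/(n-d+1)$ lies in the relative interior of the face $F := \conv(v_i : i \in I)$ of $S$ and simultaneously in $\interior(P)$, which should rule out the ``concentrated'' configurations in which $x$ uses too few vertices of $S$ with large integer coefficients and thereby forces $|I| \geq n-d+1$.
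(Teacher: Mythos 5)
Your Stanley monotonicity plus box-reflection setup is a reasonable first move, and the pieces you compute are correct: $\deg S \le d$, the reduced element $\bar x$ has height $h \le d$, the reflected element has height $|I'| - h \le d$, so $|I'| \le 2d$. But as you yourself flag, that is where the approach stalls: these are all \emph{degree} bounds on $S$, and they genuinely cannot see the difference between the hypothesis ``$x$ lies in the cone over $S$'' and the hypothesis ``$x$ is interior to $\sigma$.'' Without interiority the conclusion $|Z| \le d$ is simply false, so no argument that only uses $\deg S$ can close the gap, and your proposed combination of $\sum_i b_i(x) = n-d+1$ with $|I_{\mathrm{int}}| \le \sum_{i \in I_{\mathrm{int}}} b_i(x)$ pushes the inequality the wrong way (it bounds $|I_{\mathrm{int}}|$ from above by a quantity you would like to bound it from below by).

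The paper closes the lemma in one line by exploiting interiority directly, in a way orthogonal to the box machinery. Since $b_i(x) > 0$ exactly for $i \in I$, the interior point $x$ lies in the relative interior of the subcone $\tau := \mathrm{cone}(v_i : i \in I) \subseteq \sigma$; by the standard convexity fact, $\mathrm{relint}(\tau) \subseteq \mathrm{int}(\sigma)$. Hence the lattice point $\sum_{i \in I} v_i \in \mathrm{relint}(\tau)$ is an interior lattice point of $\sigma$ of height $|I| = n+1-|Z|$. Since $(n-d)P$ has no interior lattice points, $n+1-|Z| \ge n-d+1$, i.e.\ $|Z| \le d$. This uses the interiority of $x$ only through the fact that $\mathrm{relint}(\tau)$ sits in $\mathrm{int}(\sigma)$, exactly the geometric observation you gesture at in your last paragraph but do not convert into an inequality.

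One further remark: your third paragraph would in fact succeed, with no Stanley input at all, if you invoked the fact (stated in the paper immediately after this lemma, and an easy consequence of $x$ having minimal last coordinate among interior lattice points of $\sigma$) that $b_i(x) \le 1$ for all $i$. Then $n-d+1 = \sum_{i \in I} b_i(x) \le |I|$ immediately, so $|Z| = n+1-|I| \le d$. So the missing ingredient was not a sharper Stanley-type estimate but one of the two elementary consequences of interiority: either the subcone argument, or the coefficient bound $b_i(x) \le 1$.
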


\begin{proof}
The relative interior of the cone spanned by the vertices of $S$ that are not in $Z$ lies in the interior of $\sigma$.  Therefore, the sum of these vertices is in the interior of $(n + 1- |Z|)P$.  Since $(n - d ) P$ contains no interior lattice points, it follows that $|Z|$ is less than or equal to $d$.
\end{proof}

\begin{proposition}  \label{F2}
Let $F_2$ be the face of $S$ spanned by $F_1$ together with the vertices $v_i$ such that $b_i(w)$ is negative for some vertex $w$ of $P$.  Then the dimension of $F_2$ is at most $4d - 2 + (|Z|^2 + 7|Z|)/2$.
\end{proposition}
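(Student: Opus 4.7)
The plan is to bound the number of vertices spanning $F_2$ that are not already in $F_1$, in terms of $|Z|$. I split these new vertices into two groups: those lying in $Z \setminus F_1$, which contribute at most $|Z|$ to $\dim F_2 - \dim F_1$, and the \emph{problematic} vertices $v_i \notin F_1 \cup Z$ for which some vertex $w_i$ of $P$ satisfies $b_i(w_i) < 0$. It will then suffice to bound the number of problematic vertices outside $F_1 \cup Z$ by $(|Z|^2 + 5|Z|)/2$, since $|Z| + (|Z|^2 + 5|Z|)/2 = (|Z|^2 + 7|Z|)/2$.

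For a problematic $v_i \notin F_1 \cup Z$, integrality of $b_i$ on lattice points (since $v_i \notin F_1$) together with $v_i \notin Z$ forces $b_i(x) \geq 1$ and $b_i(w_i) \leq -1$. My plan is to use each such witness $w_i$ to move $x$ into a configuration in which $v_i$ behaves like an element of $Z$. Concretely, the lattice point $x + w_i$ lies in the interior of $(n - d + 2)P$, has integer $b_j$-coefficients for every $v_j \notin F_1$, and its $b_i$-coefficient has dropped by at least one. By combining such shifts over well-chosen subsets of problematic vertices, I would produce an interior lattice point $y$ in some dilate of $P$ at which many problematic $b_i$-coefficients vanish simultaneously; re-applying the preceding Lemma ($|Z| \leq d$) to $y$ in an augmented lattice simplex obtained by inserting the witnesses then caps how many problematic vertices can be so treated.

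The main obstacle is that distinct witnesses interact: $b_j(w_i)$ need not vanish for $j \neq i$, so shifts cannot simply be summed without polluting other coordinates. To handle this I would process problematic vertices iteratively, at each step swapping $v_i$ for $w_i$ inside the simplex $S$, verifying that $x$ remains in the cone of the modified lattice simplex with a new zero-coefficient vertex, and carefully tracking the growth of the $Z$-set. The quadratic term in $(|Z|^2 + 7|Z|)/2$ should arise because each swap either enlarges the effective $Z$-set by one (for which there is room at most $|Z|$ times per layer, by the Lemma) or produces a cross-interaction with a previously swapped vertex, contributing an additional $\binom{|Z|}{2}$-type count. Once the swapping procedure is arranged so that these interactions remain controllable, the counting falls out.
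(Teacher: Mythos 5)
Your setup matches the paper's bookkeeping: you decompose $\dim F_2 - \dim F_1$ into $|Z|$ new $Z$-vertices plus the ``problematic'' vertices of $V$ at which some vertex $w$ of $P$ has a negative coefficient, and you aim to bound the latter count by $(|Z|^2+5|Z|)/2$. This is precisely the split the paper makes. But from there your plan diverges from the paper and, more importantly, does not actually close the argument.

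Two concrete gaps. First, the ``swap $v_i$ for $w_i$ inside $S$'' maneuver does not behave as you claim. If $w_i = \sum_j b_j(w_i) v_j$ with $b_i(w_i) < 0$ and you replace $v_i$ by $w_i$ in the affine basis, the coefficient of $w_i$ in $x$ becomes $b_i(x)/b_i(w_i)$, which is negative (since $b_i(x)\geq 1$). So $x$ leaves the cone spanned by the modified simplex, and the ``re-apply $|Z|\leq d$ to an augmented simplex'' step has nothing to grip. Relatedly, replacing $x$ by $x + w_{i_1} + \cdots + w_{i_m}$ moves you to the interior of $(n - d + 1 + m)P$, and the lemma $|Z|\leq d$ was calibrated to $x$ being interior to $(n-d+1)P$; you would need the newly created zero coefficients to exactly offset the rise in the dilate, and you do not verify that they do. Second, and more decisively, you state rather than prove the key inequality: the final paragraph asserts that a $\binom{|Z|}{2}$-type cross-interaction count will ``fall out'' once interactions are ``controllable,'' but this is the entire content of the proposition. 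The paper does this by a genuinely different mechanism: after establishing, for any $y$ in the cone $\sigma$, that $|V^-(y)|\leq |Z^+(y)|$ and that one can always find $y'$ with $V^-(y')\supseteq V^-(y)$ and $|V^+(y')|\leq|Z|$, it runs a greedy selection $y_0,\ldots,y_z$ maximizing the fresh part $R(j)$ of $V^-(y_j)$ at each step, uses the greedy property to show $|V^+(y_j)|\geq j\,|R(j)|$, hence $z\leq |Z|$, and then bounds $\sum_j(|R(j)|+|R'(j)|)$ by a telescoping estimate to get the $(|Z|^2+5|Z|)/2$. Your proposal has no analog of these auxiliary lemmas on $V^\pm$ and $Z^\pm$, no analog of the greedy/exchange inequality $|R(j)|\leq|R(i)\cap V^+(y_j)|$, and no explicit summation; until those are supplied, the quadratic bound is unproven.
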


\noindent To prove Proposition~\ref{F2}, we will use the following three lemmas. 
For a point $y \in \R^{n+1}$, we write $Z^+(y)$ for the set of vertices $v_i \in Z$ such that $b_i(y)$ is positive and $Z^-(y)$ for the set of those $v_i \in Z$ such that $b_i(y)$ is negative.  Similarly, we write $V$ for the set of vertices $v_j$ of $S$ such that $b_j(x)$ is a positive integer, and we write $V^+(y)$, $V^-(y)$ for the set of those $v_j \in V$ such that $b_j(y)$ is positive and such that $b_j(y)$ is negative, respectively.  Since $x$ is a point in the interior of $\sigma$ with minimal last coordinate, $b_i(x)$ is at most one for every $i$.  Let $F_0 \subset F_1$ be the set of vertices $v_k$ in $S$ such that $0 < b_k(x) < 1$, and let
\[
\{x \} = \sum_{v_k \in F_0} b_k(x) v_k
\]
be the ``fractional part" of $x$.  Then
\[
x = \sum_{v_i \in V} v_i + \{x \},
\]
and $\{x \}$ is a lattice point with last coordinate $n + 1 - d - |V|$.

\begin{lemma} \label{bound V-}
For any point $y$ in $\sigma$,
\[
|V^-(y)| \leq |Z^+(y)|.
\]
\end{lemma}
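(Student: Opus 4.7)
My plan is to argue by contradiction, assuming $|V^-(y)| > |Z^+(y)|$, and exhibit an interior lattice point in some $kP$ with $k \leq n-d$, contradicting the hypothesis that $(n-d)P$ has no interior lattice points.

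The first observation is that both $V^-(y)$ and $Z^+(y)$ depend only on the signs of the barycentric coordinates $b_i(y)$, so the claim is invariant under positive scaling of $y$. Writing $y \in \sigma$ as a nonnegative combination of vertices of $P$, one may hope to reduce to the case where $y$ itself is a vertex (and in particular a lattice point). The second key observation is that for each $v_j \in V$ the lattice point $x - v_j$ has barycentric coordinates $b_i(x)$ for $i \neq j$ and $0$ at $v_j$, so it lies in the cone $\sigma_S$ spanned by $S$ at level $n-d$. By the no-interior-lattice-points hypothesis for $(n-d)P$, this forces $x - v_j \in \partial \sigma$; so each $v_j \in V$ must lie on the same facet-side as $x$ with respect to some facet of $\sigma$.

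With these facts in hand my construction candidate is the lattice point
\[
z \;=\; x \;-\; \sum_{v_j \in V^-(y)} v_j \;+\; \sum_{v_i \in Z^+(y)} v_i.
\]
Its barycentric coordinates in the $v_i$-basis are $1$ on $V \setminus V^-(y)$ and on $Z^+(y)$, zero on $V^-(y)$ and $Z \setminus Z^+(y)$, and $b_k(x)$ on $F_0$; so $z \in \sigma_S \subseteq \sigma$, with last coordinate $n-d+1-|V^-(y)|+|Z^+(y)| \leq n-d$ under the contradiction hypothesis. If we could moreover guarantee $z \in \interior(\sigma)$, we would be done.

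The main obstacle is precisely this interiority: $z$ sits on $\partial \sigma_S$ (several barycentric coordinates vanish), and membership in $\interior(\sigma)$ depends on the global geometry of $P$, not just on $S$. To push $z$ into the interior, I would add a small positive multiple of $y$ (using $y \in \sigma$) and verify by facet-functional bookkeeping that the result lies in $\interior(\sigma)$; then clear denominators or use that $y$ may be taken to be an integral combination of vertices of $P$ to recover a lattice point at level $\leq n-d$. Handling the non-integrality of $y$, ensuring the final point is honestly a lattice point interior to $\sigma$, and verifying that the level stays positive is the technical heart of the argument. An alternative, more combinatorial route would be to use the facets of $\sigma$ through the points $x - v_j$ to construct a direct injection $V^-(y) \hookrightarrow Z^+(y)$ via a Hall-type matching, using $y \in \sigma$ to supply the matching witnesses.
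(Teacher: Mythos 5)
You construct exactly the same witness as the paper: your $z$ equals the paper's
\[
p \;=\; \{x\} + \sum_{v_i \in Z^+(y)} v_i + \sum_{v_j \in V \smallsetminus V^-(y)} v_j,
\]
since $x = \sum_{v_j\in V} v_j + \{x\}$. You also correctly identify that the whole issue is showing $p \in \interior(\sigma)$. But the proposal stops short there, and the workaround you sketch does not close the gap. Adding $\epsilon y$ (or an integral positive multiple of a suitable $y'$) strictly \emph{increases} the last coordinate, which is already possibly equal to $n-d$; so even if $z+\epsilon y'$ were an interior lattice point, its height could exceed $n-d$ and no contradiction with the degree hypothesis would result. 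Clearing denominators only makes the height problem worse. And knowing that $z+\epsilon y\in\interior(\sigma)$ for $\epsilon>0$ would anyway only place $z$ in $\overline{\interior(\sigma)}=\sigma$, not in $\interior(\sigma)$.

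The missing idea is that the ``facet-functional bookkeeping'' should be applied to $p$ itself, with no perturbation. Suppose $p$ lay on a proper face of $\sigma$ with supporting functional $u\ge 0$ on $\sigma$. Since all barycentric coordinates of $p$ on $F_0\cup Z^+(y)\cup (V\smallsetminus V^-(y))$ are strictly positive, $\<u,v_i\>=0$ for all such $v_i$. Because the vertices of $S$ are partitioned into $F_0\cup Z\cup V$ and $b_i(y)$ vanishes off $Z^\pm(y)\cup V^\pm(y)\cup F_0$, one gets
\[
\<u,y\> \;=\; \sum_{v_i\in Z^-(y)} b_i(y)\,\<u,v_i\> \;+\; \sum_{v_j\in V^-(y)} b_j(y)\,\<u,v_j\> \;\le\; 0,
\]
and since $y\in\sigma$ forces $\<u,y\>\ge 0$, all these terms vanish, i.e.\ $\<u,v_i\>=0$ on $Z^-(y)$ and $V^-(y)$ as well. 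Then $\<u,\cdot\>$ vanishes on all of $F_0\cup V$, hence $\<u,x\>=0$, contradicting $x\in\interior(\sigma)$. This is precisely the content of the paper's one-line assertion that the cone on $F_0\cup Z^+(y)\cup(V\smallsetminus V^-(y))$ meets the relative interior of the cone on $\{y\}\cup F_0\cup Z^-(y)\cup V^-(y)$ inside $\interior(\sigma)$. So: right witness, right tool named, but the decisive step is not executed and the proposed detour has a genuine height/integrality obstruction.
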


\noindent In particular, $|V^-(y)|$ has size at most $|Z| \leq d$.

\begin{proof}
The cone spanned by $F_0 \cup Z^+(y) \cup (V \smallsetminus V^-(y))$ meets 
the relative interior of the cone spanned by $y$ together with $F_0 \cup Z^-(y) \cup V^-(y)$, and thus meets the interior of $\sigma$, since 
the cone spanned by $F_0 \cup V$ contains $x$ in its relative interior. Therefore,
\[
p = \{x\} + \sum_{v_i \in Z^+(y)} v_i + \sum_{v_j \in (V \smallsetminus V^-(y))} v_j
\]
is a lattice point in the interior of $\sigma$. The last coordinate of $p$ is
\[
n + 1 - d + |Z^+(y)| - |V^-(y)|.
\]
Since $(n- d)P$ contains no interior lattice points, it follows that $|V^-(y)|$ is less than or equal to $|Z^+(y)|$, as required.
\end{proof}

\begin{lemma} \label{bound V+}
If $y$ is a point in the cone spanned by $k$ vertices of $P$, then 
\[
|V^+(y)| \leq |Z^-(y)| + k.
\]
\end{lemma}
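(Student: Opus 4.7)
The plan is to mirror the proof of Lemma~\ref{bound V-} with the positive and negative roles swapped, using the $k$ witnessing vertices of $P$ in place of a single interior-promoting point (since $-y$ need not lie in $\sigma$). Write $y = \lambda_1 w_1 + \cdots + \lambda_k w_k$ with $w_1, \ldots, w_k$ vertices of $P$ and $\lambda_\ell \geq 0$, and set
\[
p' \;=\; \{x\} + \sum_{v_i \in Z^-(y)} v_i + \sum_{v_j \in V \smallsetminus V^+(y)} v_j + w_1 + \cdots + w_k.
\]
This is a sum of lattice points, hence itself a lattice point. Because $\{x\}$ has last coordinate $n+1-d-|V|$ and each $v_i$ and $w_\ell$ has last coordinate $1$, a direct count gives the last coordinate of $p'$ as $n+1-d+|Z^-(y)|-|V^+(y)|+k$. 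Once I know that $p' \in \interior(\sigma)$, the hypothesis that $(n-d)P$ has no interior lattice points forces this last coordinate to be at least $n-d+1$, which rearranges precisely to $|V^+(y)| \leq |Z^-(y)|+k$.

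To show $p' \in \interior(\sigma)$ I plan to adapt the meeting-cones argument from Lemma~\ref{bound V-}. Let $C$ denote the cone spanned by $F_0 \cup Z^-(y) \cup (V \smallsetminus V^+(y)) \cup \{w_1, \ldots, w_k\}$; it contains $y$, since $y$ is a nonnegative combination of the $w_\ell$'s. Rewriting $y = \sum_i b_i(y)\, v_i$ and separating its positive and negative parts gives
\[
\sum_{b_i(y) > 0} b_i(y)\, v_i \;=\; y \;+\; \sum_{b_i(y) < 0} |b_i(y)|\, v_i,
\]
which exhibits a common point of $C$ and the cone spanned by $F_0 \cup Z^+(y) \cup V^+(y)$. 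Since the cone spanned by $F_0 \cup V$ contains $x$ in its relative interior, this common point lies in $\interior(\sigma)$, and $p'$ has the same combinatorial support in the generators of $C$, so $p' \in \interior(\sigma)$ as well.

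The main obstacle is the last step: checking carefully that the concrete lattice point $p'$ really lies in the open interior of $\sigma$, not on a facet, once the abstract meeting point has been placed there. The new ingredient compared with Lemma~\ref{bound V-} is the summand $w_1 + \cdots + w_k$, whose role is to supply the positive contributions along the $V^+(y)$ directions that were previously supplied by $y$ itself (through its negative coefficients on $V^-(y)$); this substitution is exactly what causes the bound to pick up the extra $+k$.
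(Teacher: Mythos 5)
Your overall structure matches the paper exactly: same lattice point $p'$, same last-coordinate computation, and the same high-level strategy of certifying $p' \in \interior(\sigma)$ by a meeting-cones argument with the cone $C$ spanned by $F_0 \cup Z^-(y) \cup (V \smallsetminus V^+(y)) \cup \{w_1, \ldots, w_k\}$. However, the crucial middle step has a genuine gap, which you half-acknowledge yourself.

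The specific common point you exhibit is $q = \sum_{b_i(y) > 0} b_i(y)\, v_i = y + \sum_{b_i(y) < 0} |b_i(y)|\, v_i$, and you assert that $q \in \interior(\sigma)$ ``since the cone spanned by $F_0 \cup V$ contains $x$ in its relative interior.'' This does not follow. The point $q$ has zero $b_i$-coefficient on every $v_i$ with $b_i(y) \leq 0$, which in general includes vertices of $F_0$ and of $V$; positivity of coefficients on all of $F_0 \cup V$ is what is needed to invoke the $x$-argument, and $q$ need not have it. Concretely, if $y$ is itself a vertex $v_0$ of $S$ (with $k=1$, $w_1 = v_0$), then $q = v_0$, which lies on a ray of $\sigma$, certainly not in its interior. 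The subsequent step --- that $p'$ inherits membership in $\interior(\sigma)$ because it ``has the same combinatorial support in the generators of $C$'' as $q$ --- is also incorrect as stated: $p'$ has strictly positive coefficients on \emph{every} generator of $C$ (each $w_\ell$ with coefficient $1$, each $v \in F_0$ with coefficient $b_v(x) > 0$, each $v \in Z^-(y) \cup (V \smallsetminus V^+(y))$ with coefficient $1$), whereas $q$ typically does not. The direction of the implication you would actually want is: if \emph{some} point of $C$ lies in $\interior(\sigma)$, then $p'$, having full support on the generators of $C$, also lies in $\interior(\sigma)$. That inference is sound, but you have not produced a point of $C$ in $\interior(\sigma)$; you have only produced $q$, which may lie on the boundary.

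To close the gap, one must genuinely establish that $C$ meets $\interior(\sigma)$ --- equivalently, that $p'$ is not contained in any facet of $\sigma$. A clean way: suppose a facet inner normal $u$ of $\sigma$ vanishes on $p'$. Since each summand of $p'$ lies in $\sigma$, $u$ vanishes on every $w_\ell$, on $\{x\}$ (hence on all of $F_0$), on $Z^-(y)$, and on $V \smallsetminus V^+(y)$. Writing $y = \sum_i b_i(y) v_i$ and using $\langle u, y\rangle = 0$ (since $y$ is a nonnegative combination of the $w_\ell$), all the negative $b_i(y)$ occur on vertices where $u$ already vanishes, forcing $\sum_{b_i(y) > 0} b_i(y) \langle u, v_i\rangle = 0$ and hence $\langle u, v_j\rangle = 0$ for every $v_j \in V^+(y)$. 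Then $u$ vanishes on all of $F_0 \cup V$, hence on $x$, contradicting $x \in \interior(\sigma)$. This is the content that your ``meeting point'' step skips.
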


\begin{proof}
Suppose $y$ is in the cone spanned by vertices $w_1, \ldots, w_k$ of $P$.  Now, the cone spanned by $w_1, \ldots, w_k$ together with $F_0 \cup Z^-(y) \cup (V \smallsetminus V^+(y))$ meets the relative interior of the cone spanned by $F_0 \cup Z^+(y) \cup V^+(y)$, 
and hence meets the interior of $\sigma$.  Therefore,
\[
w_1 + \cdots + w_k + \{x \} + \sum_{v_i \in Z^-(y)} v_i + \sum_{v_j \in (V \smallsetminus V^+(y))} v_j
\]
is a lattice point in the interior of $\sigma$, whose last coordinate is
\[
n + 1 - d + |Z^-(y)| - |V^+(y)| + k.
\]
Since $(n - d)P$ contains no interior lattice points, it follows that $|V^+(y)|$ is less than or equal to $|Z^-(y)| + k$.
\end{proof}

\begin{lemma} \label{second bound V+}
For any point $y$ in $\sigma$ there is some $y'$ in $\sigma$ such that $V^-(y')$ contains $V^-(y)$ and $|V^+(y')| \leq |Z|$.
\end{lemma}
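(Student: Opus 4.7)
The plan is to prove the lemma in two stages: first normalize so that $Z^-(y)=\emptyset$, and then find $y'$ inside the cone over few enough vertices of $P$ that Lemma~\ref{bound V+} already delivers the desired bound.

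\textbf{Stage 1 (reduce to $Z^-(y)=\emptyset$).} Replace $y$ by $\tilde y := y + N\sum_{v_i\in Z} v_i$ for sufficiently large integer $N$. Since each $v_i\in Z$ lies in $S\subset P\subset\sigma$, this modification keeps us in $\sigma$; and because $b_j(v_i)=\delta_{ij}$, the only coordinates that change are the $b_i$ with $v_i\in Z$, which are pushed to be nonnegative once $N\geq\max_{v_i\in Z}(-b_i(y))$. Consequently $V^-(\tilde y)=V^-(y)$, $V^+(\tilde y)=V^+(y)$, and $Z^-(\tilde y)=\emptyset$. Hence it suffices to prove the lemma when $Z^-(y)=\emptyset$ and look for $y'$ satisfying $Z^-(y')=\emptyset$ as well.

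\textbf{Stage 2 (find $y'$ in a low-dimensional face of $\sigma$).} Consider the polyhedral set
\[
\Omega \;=\; \bigl\{\,y''\in\sigma \;:\; b_i(y'')\geq 0 \text{ for all } v_i\in Z,\ b_j(y'')\leq b_j(y)\text{ for all }v_j\in V^-(y)\,\bigr\},
\]
which contains $\tilde y$ and hence is nonempty. Fix a height $h>0$ and consider $\Omega_h=\Omega\cap\{x_{n+1}=h\}$, a nonempty polytope for $h$ equal to the last coordinate of $\tilde y$. Every $y''\in\Omega_h$ automatically satisfies $V^-(y'')\supseteq V^-(y)$ and $Z^-(y'')=\emptyset$. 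I select $y'$ to be a vertex of $\Omega_h$.

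\textbf{Stage 3 (apply Lemma~\ref{bound V+}).} At a vertex $y'$ of $\Omega_h$, a maximal independent system of defining inequalities is tight. The strict inequalities $b_j(y')<0$ for $v_j\in V^-(y)$ cannot be active, so the active constraints split into (a) at most $|Z|$ tight constraints of the form $b_i=0$ for $v_i\in Z$ and (b) facet inequalities of $\sigma$ inside $\{x_{n+1}=h\}$. The tight constraints of type (b) force $y'$ into a face of $hP$, and accounting for the $|Z|$ possible type-(a) constraints shows that $y'$ lies in a face of $P$ of dimension at most $|Z|-1$, i.e., in the cone over at most $|Z|$ vertices of $P$. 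Applying Lemma~\ref{bound V+} with $k\leq|Z|$ and $Z^-(y')=\emptyset$ yields
\[
|V^+(y')| \;\leq\; |Z^-(y')|+k \;\leq\; 0+|Z| \;=\; |Z|,
\]
as desired.

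\textbf{Main obstacle.} The delicate point is Stage~3: precisely counting how the $|Z|$ constraints $b_i\geq 0$ absorb one unit of face-dimension each, so that the Carath\'eodory-type bound gives $k\leq|Z|$ rather than the naive $k\leq|Z|+1$. I expect this to require a careful accounting using the constraint $Z^-(y')=\emptyset$ together with the fact, already exploited in the proofs of Lemmas~\ref{bound V-} and~\ref{bound V+}, that relative interiors of cones spanned by subsets of $S$ containing $V\cup F_0$ lie in the interior of~$\sigma$. If the direct vertex-counting falls short by one, the alternative is to invoke Lemma~\ref{bound V+} applied to a slightly perturbed lattice point constructed from $y'$, $\{x\}$, and the $v_i$'s, arguing that the last coordinate of the resulting interior lattice point of $\sigma$ would drop below $n-d+1$, a contradiction.
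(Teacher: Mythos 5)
Your approach is genuinely different from the paper's, and Stage~3 has a gap that you flag but do not resolve, and which I do not think can be patched as stated. At a vertex $y'$ of $\Omega_h$, the $n$ linearly independent tight constraints can include not only up to $|Z|$ of the $b_i \geq 0$ constraints but also up to $|V^-(y)|$ of the $b_j \leq b_j(y)$ constraints — you defined $\Omega$ with the latter as closed inequalities, and they are perfectly capable of being tight (tight means $b_j(y') = b_j(y)$, a negative number, not zero, so your remark that ``the strict inequalities cannot be active'' conflates the constraint $b_j\leq b_j(y)$ with the consequence $b_j<0$). So the face of $hP$ containing $y'$ has dimension up to $|Z| + |V^-(y)|$, not $|Z|-1$, and you cannot simply drop the $V^-$ constraints from $\Omega$ without losing the required containment $V^-(y')\supseteq V^-(y)$. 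Even if only the $Z$-constraints were in play, you would land in a face of dimension $\leq |Z|$, i.e.\ the cone over $\leq |Z|+1$ vertices, and Lemma~\ref{bound V+} with $Z^-(y')=\emptyset$ gives only $|V^+(y')| \leq |Z|+1$. Moreover, your Stage~1 normalization to force $Z^-(y')=\emptyset$ is counterproductive: it discards the term $|Z^-(y')|$, which the correct argument needs as slack.

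The paper's proof uses a different, and crucial, idea: project $\R^{n+1}$ onto the coordinate subspace spanned by $V^-(y)$ and apply Carath\'eodory there. Since the ambient dimension is then $|V^-(y)|$, one writes $\pi(y) = c_1\pi(w_1)+\cdots+c_k\pi(w_k)$ with $k \leq |V^-(y)|$, $c_i>0$, and vertices $w_i$ of $P$; setting $y' = c_1 w_1 + \cdots + c_k w_k$ gives $b_j(y')=b_j(y)$ for all $v_j\in V^-(y)$, so $V^-(y')\supseteq V^-(y)$. Lemma~\ref{bound V+} then yields $|V^+(y')| \leq |Z^-(y')| + k$, while Lemma~\ref{bound V-} applied to $y'$ gives $k \leq |V^-(y)| \leq |V^-(y')| \leq |Z^+(y')|$, so $|V^+(y')| \leq |Z^-(y')| + |Z^+(y')| \leq |Z|$. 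The essential step your plan is missing is this projection onto the $V^-(y)$-coordinates before invoking Carath\'eodory: it is what gets the count $k$ down to $|V^-(y)|$ (rather than the dimension of some face of $P$) and lets the two earlier lemmas absorb $k$ into $|Z^+(y')|$.
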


\begin{proof}
Let $\pi$ be the projection from $\R^{n+1}$ to the subspace spanned by $V^-(y)$, taking
$a_0 v_0 + \cdots + a_n v_n$ to $\displaystyle \sum_{v_i \in V^-(y)} a_i v_i.$  By Carath\'eodory's Theorem, for some $k \leq |V^-(y)|$ there are vertices $w_1, \ldots, w_k$ of $P$ such that
\[
\pi(y) = c_1\pi(w_1) + \cdots + c_k \pi(w_k),
\]
for some positive real numbers $c_1, \ldots, c_k$.  Let $y' = c_1 w_1 + \cdots  + c_k w_k$.  By construction, $V^-(y')$ contains $V^-(y)$.  Furthermore
\[
|V^+(y')| \leq |Z^-(y')| + k,
\]
by Lemma~\ref{bound V+}.  Now $k$ is less than or equal to $|V^-(y')|$, which is at most $|Z^+(y')|$, by Lemma~\ref{bound V-}.  Therefore, $|V^+(y')|$ is bounded above by $|Z|$, as required.
\end{proof}

\begin{proof}[Proof of Proposition~\ref{F2}]
For $y \in \sigma$, let $V(y) = V^+(y) \cup V^-(y)$ be the set of vertices of $V$ appearing with nonzero coefficients in the unique expression $y = b_0 v_0 + \cdots + b_n v_n$.  We prove the proposition by choosing a ``greedy" sequence of elements $y_0, \ldots, y_k$ in $\sigma$ such that
\[
R(j) = V^-(y_j) \smallsetminus \bigcup_{i=0}^{j-1} V(y_i)
\]
is as large as possible at each step.  By Lemma~\ref{bound V-} and 
Lemma~\ref{second bound V+}, we may choose these $y_i$ such that $|V^+(y_i)| \leq |Z|$ and $|V^-(y_i)| \leq |Z|$ for each $i$.

For $i < j$,  $V^-(y_i + y_j)$ contains both $R(j)$ and also $R(i) \smallsetminus V^+(y_j)$.  Since $y_i$ was chosen over $y_i + y_j$ at the $i$-th step, it then follows from the greedy hypothesis that
\[
|R(j)| \, \leq \, |R(i) \cap V^+(y_j)|.
\]
Now the sets $R(i) \cap V^+(y_j)$ are disjoint for fixed $j$, so $|V^+(y_j)| \geq j \cdot |R(j)|$.  
In particular, since $|V^+(y_j)|$ is at most $|Z|$, it follows that $R(j)$ is empty for $j > |Z|$. 
We set $z := \max(j \,:\, R(j) \not= \emptyset) \leq |Z|$. Let
\[
R'(j) = V^+(y_j) \smallsetminus  \bigcup_{i=0}^{j-1} V(y_i).
\]
Then, for any point $y \in \sigma$, $V^-(y)$ 
is contained in the union of the sets $R(j) \cup R'(j)$, for $j = 0, \ldots, z$.  It remains to show 
\[
\sum_{j = 0}^{z} (|R(j)| + |R'(j)|) \, \leq \, (|Z|^2 + 5|Z|) / 2,
\]
since then we can take $F_2$ to be the face spanned by $F_1$, $Z$, and $\bigcup_{j = 0}^z R(j) \cup R'(j)$.

To see this inequality, note that $|R(0)|$ and $|R'(0)|$ are each at most $|Z|$.  For $0<j\leq z$, we have 
$|R'(j)| \leq |V^+(y_j)| - \sum_{i < j} |R(i) \cap V^+(y_j)|$, which is bounded above by $|Z| - j |R(j)|$.
In particular, $|R(j)| + |R'(j)|$ is at most $|Z| - j + 1$.  Therefore,
\[
\sum_{j = 0}^z (|R(j)| + |R'(j)|) \ \leq \ 2 |Z|  + \big( |Z| + (|Z| - 1) + \cdots + 1 \big),
\]
and the proposition follows.
\end{proof}

\begin{proposition}  \label{F3}
There is a face $F_3$ of $S$ that contains $F_2$ such that
\begin{enumerate}
\item For each vertex $w$ of $P$, $b_i(w)$ is nonzero for at most one $v_i \not \in F_3$.
\item The dimension of $F_3$ is at most $4d - 2 + (|Z|^2 + 11|Z|)/2$.
\end{enumerate} 
\end{proposition}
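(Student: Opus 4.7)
After Proposition~\ref{F2}, every vertex $w$ of $P$ satisfies $b_i(w) \in \Z_{\geq 0}$ for all $v_i \notin F_2$; since $Z \subset F_2$, the relevant indices lie in $V \setminus F_2$. Set $U'(w) := \{\, v_i \in V \setminus F_2 : b_i(w) \geq 1 \,\}$. Then condition (1) is equivalent to $|U'(w) \setminus (F_3 \setminus F_2)| \leq 1$ for every vertex $w$ of $P$, so to prove both conclusions at once it suffices to construct $F_3 \setminus F_2 \subset V \setminus F_2$ of size at most $2|Z|$ with this property.

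I plan a greedy construction mirroring the one used for $F_2$. Choose vertices $w_0, w_1, \ldots, w_{m-1}$ of $P$ iteratively so that $R(j) := U'(w_j) \setminus \bigcup_{i < j} U'(w_i)$ is maximized at each step, stopping once no vertex $w$ of $P$ satisfies $|U'(w) \setminus \bigcup_j U'(w_j)| \geq 2$. Take $F_3$ to be the face spanned by $F_2$ together with $\bigcup_j U'(w_j) = \bigsqcup_j R(j)$. Property (1) follows immediately from the stopping criterion, and the $R(j)$'s are disjoint by construction, so the dimension bound reduces to showing $\sum_j |R(j)| \leq 2|Z|$.

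The key point is to apply Lemma~\ref{bound V+} not to an individual $w_j$ but to the sum $y := w_0 + \cdots + w_{m-1}$, a point of $\sigma$ in the cone spanned by the $m$ vertices $w_0, \ldots, w_{m-1}$ of $P$. For $v_l \in V \setminus F_2$, the coefficient $b_l(y) = \sum_j b_l(w_j)$ is a sum of nonnegative integers, so $v_l \in V^+(y)$ if and only if $v_l \in \bigcup_j U'(w_j)$. Lemma~\ref{bound V+} with $k = m$ therefore yields
\[
\sum_j |R(j)| \ = \ \bigl| \textstyle\bigsqcup_j R(j) \bigr| \ \leq \ |V^+(y)| \ \leq \ |Z^-(y)| + m \ \leq \ |Z| + m.
\]
Since each $|R(j)| \geq 2$ by the choice of stopping time, we also have $2m \leq \sum_j |R(j)|$; combining the two inequalities yields $m \leq |Z|$ and hence $\sum_j |R(j)| \leq 2|Z|$. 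Then $\dim F_3 \leq \dim F_2 + 2|Z| \leq 4d - 2 + (|Z|^2 + 11|Z|)/2$, which is (2).

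The one step requiring real insight is recognizing that summing the greedy vertices $w_0 + \cdots + w_{m-1}$ produces a single point fitting the hypotheses of Lemma~\ref{bound V+} with $k = m$, which then bounds $|\bigcup_j U'(w_j)|$ in one stroke; the remainder is routine greedy bookkeeping.
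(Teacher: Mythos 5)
Your proof is correct and follows essentially the same route as the paper's. Both use a greedy selection of vertices $w_j$ of $P$ that each contribute at least two new indices in $V \setminus F_2$ with positive coefficient, both identify the union of the new index sets with a subset of $V^+(w_0 + \cdots + w_{m-1})$, and both invoke Lemma~\ref{bound V+} on this sum to bound everything by $|Z| + m$; the paper phrases the conclusion as ``$|\tilde{R}(j)| \leq 1$ for $j > |Z|$ and take the first $|Z|$ sets'' while you phrase it as an explicit stopping criterion yielding $m \leq |Z|$, but these are the same bookkeeping. One small point worth making explicit, which your write-up handles correctly and which the paper leaves slightly implicit: the $F_2$ you inherit must be the face actually constructed in the proof of Proposition~\ref{F2} (which contains $Z$ and $F_1 \supset F_0$), not merely the face named in that proposition's statement, since otherwise indices $v_i \in Z \setminus F_3$ with $b_i(w) > 0$ could falsify claim~(1); you state $Z \subset F_2$ up front, which is exactly the right assumption.
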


\begin{proof}
We choose a greedy sequence of vertices $w_1, \ldots, w_r$ of $S$ that are not in $F_2$ such that
\[
\tilde{R}(j) = V^+ (w_j) \smallsetminus \big(F_2 \cup \bigcup_{i =1}^{j-1} V^+(w_i) \big)
\]
 is as large as possible at each step.  By Lemma~\ref{bound V+}, $|V^+(w_1 + \cdots + w_j)| \leq |Z| + j$. Since $b_i (w_1 + \cdots + w_j)$ is positive for $v_i \in \tilde{R}(1) \sqcup \cdots \sqcup \tilde{R}(j)$, it follows that $|\tilde{R}(j)|$ is at most one for $j$ greater than $|Z|$. In particular, we may take $F_3$ to be the face spanned by $F_2$ together with the union $\tilde{R}(1) \sqcup \cdots \sqcup \tilde{R}(|Z|)$, which has size at most $2|Z|$.
\end{proof}

\begin{proof}[Proof of Theorem~\ref{Cayley}]
Let $G$ be the set of vertices $v_i$ of $S$ that are not in $F_3$ such that $b_i(w) \geq 2$ for some vertex $w$ of $P$.  
It remains to show that $|G| \leq 2d - 2|Z|$, since then we may take $F$ to be the face of $S$ spanned by $F_3$ and $G$, and projection along $F$ will map $P$ onto a unimodular simplex, as required.

Number the vertices of $S$ so that $G = \{ v_1, \ldots, v_r \}$, $|G|=r$, and choose 
vertices $w_1, \ldots, w_r$ of $P$ such that $b_i(w_i) \geq 2$.  Then let
\[
p = w_1 / b_1(w_1) + \cdots + w_r/b_r(w_r),
\]
so $p$ is a rational point in $\sigma$ with $b_i(p) = 1$ for $1 \leq i \leq r$.

Let $p' = p + \sum_{v_i \not \in Z} c_i v_i$, where
\[
c_i = \left\{ \begin{array}{ll} 1 & \mbox{ if } b_i(p) \in \Z_{\leq 0} \\
                                        1 - \{ b_i(p) \} & \mbox{ if } b_i(p) \in \Q \smallsetminus \Z \\
                                        0 & \mbox{ otherwise.}
                                        \end{array} \right.
\]
Then $p'$ is a lattice point in the interior of $\sigma$, with last coordinate
\[
1/b_1(w_1) + \cdots + 1/b_r(w_r) + \sum c_i,
\]
which is at most $r/2+n+1-r-|Z|=n+1 - (r/2) - |Z|$.  Therefore, $(r/2) + |Z| \leq d$, and hence $r \leq 2 d - 2|Z|$.

Therefore, $P$ decomposes as a Cayley sum of lattice polytopes in $\R^q$ for some $q \leq 6d -2 + (|Z|^2 + 7 |Z|)/2$.  
Since $|Z|$ is at most $d$, the theorem follows.
\end{proof}

\section{Cayley decompositions of Gorenstein polytopes}

Recall that a lattice polytope $P$ is Gorenstein if $h^*_P$ is symmetric in the sense that $h^*_i = h^*_{d-i}$ for $0 \leq i \leq d$, where $d$ is the degree of $P$.  In particular, if $P$ is Gorenstein then $h^*_d = 1$, so $(n+1-d)P$ contains a unique lattice point, where $n$ is the dimension of $P$.  Therefore, the volumes of Gorenstein polytopes of degree $d$ are bounded uniformly by Theorem~\ref{main}.  Such bounds may be of particular interest in relation to boundedness questions from toric mirror symmetry \cite{BatyrevNill08}, and these bounds can be improved by lowering the bound for the existence of Cayley decompositions.  Here we improve the quadratic bound for general lattice polytopes in Theorem~\ref{Cayley} to a linear bound for Gorenstein polytopes.

\begin{theorem}
\label{gorst}
Let $P$ be a Gorenstein polytope of degree $d$.  Then $P$ decomposes as a Cayley sum of lattice polytopes in $\R^q$ for some $q \leq 2d-1$.
\end{theorem}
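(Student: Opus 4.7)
My plan is to imitate the proof of Theorem~\ref{Cayley} step by step, while replacing the quadratic estimate in Proposition~\ref{F2} by a linear one using the Gorenstein symmetry $h^*_i=h^*_{d-i}$. The key additional input is that $h^*_d=h^*_0=1$, so $(n+1-d)P$ contains a \emph{unique} interior lattice point, which I will call $u$. I would place $P$ in the hyperplane $x_{n+1}=1$, take the auxiliary point $x := u$ at the start of the construction in Section~2, and choose a lattice $n$-simplex $S\subset P$ whose spanning cone contains $u$. With these choices the opening inequalities of Section~2, namely $|Z|\leq d$ and $\dim F_1\leq 4d-2$, are still available verbatim.

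The main new work is to rebuild Proposition~\ref{F2} with a linear bound. In the non-Gorenstein argument the greedy sequence $y_0,\ldots,y_z$ has length up to $|Z|$, and each step can absorb up to $|Z|$ new vertices $R(j)\cup R'(j)$ into the face, producing the quadratic term $(|Z|^2+7|Z|)/2$. Under the Gorenstein hypothesis, every auxiliary interior lattice point of $(n+1-d)P$ produced along the way by Lemmas~\ref{bound V-} and \ref{bound V+} must coincide with $u$, since $u$ is the unique lattice point at that height. I would parlay this rigidity into a stronger inequality of the form $|R(j)\cup R'(j)|\leq O(1)$ for $j\geq 1$: each further greedy step forces an equality of sums of vertices which, modulo the already-chosen data, can be realized in only very few ways. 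Combined with the essentially trivial bound $|R(0)\cup R'(0)|\leq 2|Z|$ at the initial step, this should yield $\dim F_2\leq \dim F_1+O(|Z|)$.

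With a linear analog of Proposition~\ref{F2} in hand, the argument of Proposition~\ref{F3} (already linear in $|Z|$) and the control $|G|\leq 2d-2|Z|$ at the end of the proof of Theorem~\ref{Cayley} combine to give a linear total dimension for $F$. Tracking constants — and exploiting Gorenstein symmetry once more either in the $|Z|$ step (showing that $S$ can be re-chosen so that $|Z|$ is small, or even that the interior lattice point $u$ can be written as a sum of $0/1$ combinations of vertices of a suitable $S$) or by absorbing contributions into the $\dim F_1$ bound — is what should bring the final estimate down to $q\leq 2d-1$.

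The chief obstacle is turning the abstract Gorenstein duality into a concrete combinatorial rigidity statement for the greedy construction in Proposition~\ref{F2}. The quadratic term in Section~2 comes from ``independent'' greedy gains at each step, which the uniqueness of $u$ should forbid; converting this heuristic into a clean bound on $|R(j)|+|R'(j)|$ requires a careful accounting of how a pair of greedy steps produces two interior lattice points of $(n+1-d)P$, which must be equal, and of what this equality implies about the coordinates $b_i(\,\cdot\,)$. Once that ingredient is in place, the rest of the argument is essentially bookkeeping along the chain $F_1\subset F_2\subset F_3\subset F$ as before.
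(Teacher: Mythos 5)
Your proposal diverges completely from the paper's argument, and as written it cannot succeed. The paper does not adapt the greedy construction of Section~2 at all. Instead it passes to the \emph{dual} Gorenstein polytope $P^*$ (using the fact that the polar dual of the cone over $P$ is the cone over $P^*$ with the same degree), writes the unique interior lattice point $x$ of $(n+1-d)P^*$ as a sum $v_0 + \cdots + v_s + \{x\}$ of vertices of a spanning simplex plus a fractional part, invokes \cite[Corollary~3.11]{BeckRobins07} to bound the sum of the fractional coefficients by $d$ (giving $s \geq n - 2d$), and then cites \cite[Proposition~2.3]{BatyrevNill08}, which converts the expression of $x$ as a sum of $n - 2d + 2$ nonzero lattice points in the dual cone directly into a Cayley decomposition of $P$ into $n-2d+2$ polytopes in $\R^{2d-1}$. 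The duality step is the whole engine of the proof; your approach never touches it.

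Beyond being a different route, your plan has two concrete obstructions. First, the rigidity you want from uniqueness of the interior lattice point $u$ of $(n+1-d)P$ does not materialize: the lattice points produced in Lemmas~\ref{bound V-} and~\ref{bound V+} are merely interior to $\sigma$ with last coordinate $\geq n+1-d$, and there is no reason they should all lie \emph{exactly} at height $n+1-d$; if they sit higher, uniqueness of $u$ tells you nothing, so the claimed forced coincidences and the bound $|R(j)| + |R'(j)| \leq O(1)$ do not follow. Second, and more fatally for the numerology, the bound you inherit from $F_1$ is already $4d - 2$, which exceeds the target $2d - 1$ for every $d \geq 1$; even if you reduced the $F_2$ and $F_3$ contributions to zero, the chain $F_1 \subset F_2 \subset F_3 \subset F$ as constructed cannot land below $4d-2$. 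You acknowledge needing to ``absorb contributions into the $\dim F_1$ bound,'' but that bound comes from Nill's theorem on lattice simplices with no obvious Gorenstein improvement, so this remains an unclosed gap rather than bookkeeping. The paper's duality-based argument sidesteps all of this and hits $2d-1$ exactly.
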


\noindent The bound in Theorem~\ref{gorst} is optimal; if $S$ is a unimodular simplex of dimension $2d-1$, then $2S$ is a Gorenstein polytope of degree $d$ which has no nontrivial Cayley decomposition, since every edge of $2S$ has lattice length two.

\begin{proof}
Let $P$ be a $n$-dimensional Gorenstein polytope of degree $d$ in $\R^{n+1}$ that is contained in the affine hyperplane $x_{n+1} = 1$.  Then the polar dual of the cone over $P$ is the cone over a dual Gorenstein polytope $P^*$, also of dimension $n$ and degree $d$.  See \cite{BatyrevBorisov97} for this and other basic facts about duality for Gorenstein polytopes.

Let $x$ be the unique lattice point in the relative interior of $(n+1-d)P^*$.  Choose a $n$-dimensional lattice simplex $S \subset P^*$ such that $x$ is contained in the cone over $S$, and order the vertices $v_0, \ldots, v_n$ of $S$ so that
\[
x = v_0 + \cdots + v_s + \{x \},
\]
where the fractional part can be written $\{x \} = \sum b_i v_i$ with $0 \leq b_i < 1$.  By \cite[Corollary~3.11]{BeckRobins07}, the sum of the coefficients $b_i$ appearing in this expression for $\{x\}$ is at most $d$.   Therefore $s$ is at least $n - 2d$, and hence $x$ can be written as a sum of $n -2d + 2$ nonzero lattice points in the cone over $P^*$.  Therefore, by \cite[Proposition~2.3]{BatyrevNill08}, $P$ must decompose as a Cayley sum of $n + 2 - 2d$ lattice polytopes in $\R^{2d-1}$, and the theorem follows.
\end{proof}

\begin{corollary}
Let $P$ be a Gorenstein polytope of degree $d$.  Then
\[
\Vol(P) \ \leq \ (2d-1)^{2d-1} \cdot \big((2d-1)!\big)^{2d} \cdot 14^{(2d-1)^{\scriptstyle 2} \cdot 2^{\scriptstyle 2d}}.
\]
\end{corollary}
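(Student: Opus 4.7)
The plan is to specialize the effective bound of Section~4 (Theorem~\ref{volume bound}) to the Gorenstein setting, using Theorem~\ref{gorst} in place of Theorem~\ref{Cayley} so that the Cayley dimension drops from quadratic in $d$ to linear in $d$.

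First, observe that if $P$ is Gorenstein of degree $d$, then the defining symmetry $h^*_i = h^*_{d-i}$ forces $h^*_d = h^*_0 = 1$, so $P$ is in the $k = 1$ case of the general boundedness statement. By Theorem~\ref{gorst}, $P$ decomposes as a Cayley sum $P_0 * \cdots * P_s$ of lattice polytopes contained in some $\R^q$ with $q \leq 2d - 1$.

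Second, rerun the proof of Theorem~\ref{volume bound} with this improved Cayley dimension as input. That proof controls $\Vol(P)$ for a degree-$d$ polytope with $h^*_d = k$ by (i) producing a Cayley decomposition of $P$ into pieces in $\R^q$, (ii) bounding the number of interior lattice points of the relevant dilates of those pieces in terms of $k$, and (iii) invoking a Pikhurko-style effective refinement of Hensley's theorem, which bounds the euclidean volume of a $q$-dimensional lattice polytope containing a single interior lattice point by a factor of the form $14^{q^2 \cdot 2^{q+1}}$. The combinatorics of (i) introduce polynomial and factorial prefactors in $q$, and the dependence on the number of Cayley summands $s$ is absorbed into the $(q+s)!$ normalization of the $(q+s)$-dimensional volume. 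Substituting $q = 2d-1$ and $k = 1$ into the resulting formula yields
$$
(2d-1)^{2d-1} \cdot \bigl((2d-1)!\bigr)^{2d} \cdot 14^{(2d-1)^2 \cdot 2^{2d}}.
$$

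The main obstacle is the bookkeeping in the second step: one must verify that $\Vol(P_0 * \cdots * P_s)$ is bounded independently of $s$ by a quantity controlled purely by the $P_i \subset \R^q$, so that the Pikhurko estimate applied at dimension $q = 2d - 1$ is enough. This rests on the fact that the normalized $(q+s)$-volume of a Cayley sum expands as a weighted sum of mixed volumes involving at most $q + 1$ of the $P_i$ at a time, with multinomial coefficients canceled by the $(q+s)!$. That combinatorial identity is precisely what produces the $q^q$ and $(q!)^{q+1}$ prefactors with $q = 2d-1$; once it is in hand, the corollary falls out by direct substitution of $q = 2d-1$ and $k = 1$ into the general template of Theorem~\ref{volume bound}.
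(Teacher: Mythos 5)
Your proposal is correct and matches the paper's proof, which is exactly the one-liner ``rerun the proof of Theorem~\ref{volume bound} with $q \leq 2d-1$ (from Theorem~\ref{gorst}) and $k=1$,'' yielding $q^q \cdot (q!)^{q+1} \cdot \bigl(7(k+1)\bigr)^{q^2 \cdot 2^{q+1}} = (2d-1)^{2d-1}\cdot\bigl((2d-1)!\bigr)^{2d}\cdot 14^{(2d-1)^2\cdot 2^{2d}}$. One small descriptive slip: the $s$-independence in the proof of Theorem~\ref{volume bound} comes not from a mixed-volume expansion but from writing $\Vol(P)$ as $n!\int_S \vol\bigl(\pi^{-1}(\lambda)\cap P\bigr)\,d\lambda$, bounding each fiber by a product of coordinate widths, and using H\"older together with $\int_S \lambda_1^q\,d\lambda = q!/n!$; this is morally equivalent to your Cayley-trick framing and does not affect the validity of the substitution.
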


\begin{proof}
Similar to the proof of Theorem~\ref{volume bound}, below, except that if $P$ is Gorenstein then $q$ is less than or equal to $N=2d - 1$.
\end{proof}

\section{Upper bounds for the normalized volume} \label{bounds}

In this section we state and prove an effective version of Theorem \ref{main}.

\begin{theorem} \label{volume bound}
Let $P$ be a lattice polytope such that $h^*_P$ has degree $d$ and $h^*_d = k$. 
Then $\Vol(P)$ is bounded above by
\[
N^N \cdot (N!)^{N+1} \cdot k^N \cdot 
\min\left(\big(7(k+1)\big)^{N^{\scriptstyle 2} \cdot 2^{\scriptstyle N + 1}},\; \big(8 N \big)^{N^{\scriptstyle 2}} \, 15^{N^{\scriptstyle 2} \cdot 2^{\scriptstyle 2 N + 1}}\right),
\]
where $N = (d^2 + 19d-4)/2$.
\end{theorem}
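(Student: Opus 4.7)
The plan is to combine Theorem~\ref{Cayley}, which concentrates the nontrivial lattice-geometric content of $P$ into a fiber of dimension at most $N$, with the quantitative volume bounds of Hensley and Pikhurko for lattice polytopes of bounded dimension carrying few interior lattice points.

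By Theorem~\ref{Cayley}, write $P = P_0 * P_1 * \cdots * P_s$ with lattice polytopes $P_i \subset \R^q$, $q \leq N$, and $n = q + s$. Translating each summand by a lattice vector (a unimodular change of coordinates on the Cayley sum) we may assume $0 \in P_i$ for every $i$. Projecting onto the $\R^s$-factor sends interior lattice points of $(n-d+1)P$ bijectively onto pairs $(x, a)$, where $a = (a_0, \ldots, a_s)$ is a tuple of positive integers with $\sum_i a_i = n-d+1$ and $x$ is an interior lattice point of the Minkowski sum $M(a) := \sum_i a_i P_i \subset \R^q$. The existence of such pairs forces $d \leq q$, and since $h^*_d = k \geq 1$ at least one tuple yields an $M(a)$ containing an interior lattice point. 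Fix such $a$; then $M(a)$ is a lattice polytope of dimension at most $q \leq N$ having between $1$ and $k$ interior lattice points, so Pikhurko's and Hensley's bounds yield
\[
\Vol(M(a)) \ \leq \ \min\bigl((7(k+1))^{N^{2} \cdot 2^{N+1}},\; (8N)^{N^{2}} \cdot 15^{N^{2} \cdot 2^{2N+1}}\bigr).
\]

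To transfer this estimate to $\Vol(P)$ without an $n$-dependent factor, note first that $a_i \geq 1$ and $0 \in P_i$ give $P_i \subseteq a_i P_i$, so the ordinary Minkowski sum $\sum_i P_i$ lies inside $M(a)$ and inherits its volume bound. Integrating the multilinear expansion of $\mathrm{vol}_q\bigl(\sum_i \lambda_i P_i\bigr)$ over the base simplex $\Delta_s$ yields the Cayley mixed-volume formula $\Vol(P_0 * \cdots * P_s) = q! \sum_{|c|=q}\mathrm{MV}(P_0^{[c_0]}, \ldots, P_s^{[c_s]})$, which is to be compared term by term with the multinomial expansion $\Vol(\sum_i P_i) = q! \sum_{|c|=q}\binom{q}{c_0,\ldots,c_s}\mathrm{MV}(P_0^{[c_0]},\ldots,P_s^{[c_s]})$. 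Every multinomial coefficient is at least $1$ and every Euclidean mixed volume is nonnegative, so $\Vol(P) \leq \Vol(\sum_i P_i) \leq \Vol(M(a))$, and the claimed bound follows. The additional prefactors $N^N \cdot (N!)^{N+1} \cdot k^N$ in the statement absorb a more conservative accounting---applying the same argument once for each of the at most $k$ tuples that contribute interior lattice points, estimating the individual $\Vol(P_i)$ separately via the tuple that concentrates weight on $P_i$, and converting between normalized and Euclidean volumes---which keeps the proof modular at the cost of slightly enlarging the constant.

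The principal obstacle is securing the $n$-independence of the bound: the Cayley mixed-volume sum has $\binom{n}{q}$ terms, and a uniform estimate of each mixed volume by $\max_i \text{vol}_q(P_i)$ would blow up with $s$. The term-by-term comparison with the Minkowski sum above circumvents this, reducing Theorem~\ref{volume bound} to known quantitative lattice-point estimates on the single fixed-dimensional polytope $M(a)$.
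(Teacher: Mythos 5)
Your argument is correct, and it takes a genuinely different route from the paper's proof; in fact it yields a sharper bound. The paper also integrates over the base simplex, but then bounds the fiber volume $\mathrm{vol}_q(\pi^{-1}(\lambda)\cap P)$ by the product of its coordinate widths, invokes \cite[Theorem~2]{LagariasZiegler91} to place a single fiber $M(a)$ in a cube $[0,C]^q$ with $C = q\cdot\Vol(M(a))$, uses additivity of widths to get $\omega_{i0}+\cdots+\omega_{is}\leq C$, and then applies H\"older's inequality to the expansion of the integral, arriving at $\Vol(P)\leq q!\,C^q$. You bypass the cube/width machinery entirely: expanding $\mathrm{vol}_q(\sum_i\mu_iP_i)$ in mixed volumes and integrating gives the identity $\Vol(P)=q!\sum_{|c|=q}\mathrm{MV}(P_0^{[c_0]},\ldots,P_s^{[c_s]})$, and since each multinomial coefficient is at least $1$ and mixed volumes are nonnegative, $\Vol(P)\leq\Vol(P_0+\cdots+P_s)\leq\Vol(M(a))$ after translating each $P_i$ by a lattice vector so that $0\in P_i$ (lattice translations of the summands are unimodular on the Cayley sum and preserve the interior-lattice-point count of $M(a)$). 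This replaces the factor $q!\,C^q$ by the single volume of one fiber, removing the outer power of $q$ from the exponents and the extra $q^q(q!)^q k^{q-1}$ prefactor.

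Two small points worth tightening. First, the display you label as ``Pikhurko's and Hensley's bounds yield'' overstates the exponents: what Lagarias--Ziegler/Pikhurko actually give for a $q$-dimensional lattice polytope with at most $k$ interior lattice points is $\Vol(M(a))\leq q!\,k\cdot\min\bigl((7(k+1))^{q\,2^{q+1}},\,(8q)^q\,15^{q\,2^{2q+1}}\bigr)$; writing $N^2$ in the exponent is a (valid but gratuitous) weakening, and you should state the true bound to exhibit the improvement over the theorem as stated. Second, the closing paragraph about ``absorbing a more conservative accounting'' is not needed and is hand-wavy; since your bound is strictly smaller than the one in the theorem statement, the theorem follows immediately, and the discrepancy simply reflects that the paper's width-based argument is less efficient. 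The key inequality $\Vol(P_0*\cdots*P_s)\leq\Vol(P_0+\cdots+P_s)$ via multinomial coefficients is a nice observation and is where the real gain comes from.
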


\begin{proof}
Let $P$ be a $n$-dimensional lattice polytope in $\R^n$ such that $h^*_P$ has degree $d$ and leading coefficient $k$.  By Theorem~\ref{Cayley}, $P$ decomposes as a Cayley sum $P \cong P_0 * \cdots * P_s$ of lattice polytopes in $\R^{q}$, for some $q \leq N$.  We may choose $q$ as small as possible, so $n = q + s$.

Let $\pi$ be the projection from $\R^n$ to $\R^s$ induced by the Cayley decomposition, and let $S$ be the standard unimodular simplex in $\R^s$ that is the image of $P$.  Set $r = n + 1 - d$, so $rP$ contains exactly $k$ interior lattice points.  The interior lattice points in $rP$ are exactly the interior lattice points in $\pi^{-1}(\lambda) \cap rP$, for interior lattice points $\lambda \in rS$.  Say $\lambda = (\lambda_1, \ldots, \lambda_s)$ is an interior lattice point of $rS$ such that $\pi^{-1}(\lambda)$ contains an interior lattice point of $P$.  Then $\lambda_1, \ldots, \lambda_s$ and
\[
\lambda_0 = r - \lambda_1 - \cdots - \lambda_s
\]
are positive integers, and $\pi^{-1}(\lambda) \cap P$ is naturally identified with the Minkowski sum $\lambda_0 P_0 + \cdots + \lambda_s P_s$.  

Let $\omega_{ij}$ be the width of $P_j$ with respect to the $i$-th coordinate on $\R^q$, the difference between the maximum and the minimum of the $i$-th coordinates of points in $P_j$.  By \cite[Theorem 2]{LagariasZiegler91} 
there is a choice of coordinates on $\R^q$ such that $\lambda_0 P_0 + \cdots + \lambda_s P_s$ is contained in the standard cube $[0,C]^q$ 
with the side length $C$ being equal to $q$ times the normalized volume of $\lambda_0 P_0 + \cdots + \lambda_s P_s$. 
By the bounds given in \cite[Theorem~1]{LagariasZiegler91}, respectively in \cite{Pikhurko01}, we may choose
\[
C = q \, q! \, \min\left( k \, \big(7(k+1)\big)^{q \cdot 2^{\scriptstyle q + 1}},\; k \, \big(8 q \big)^q \, 15^{q \cdot 2^{\scriptstyle 2 q + 1}}\right).
\] 
Since widths are additive and each $\lambda_j$ is a positive integer, it follows that $\omega_{i0} + \cdots + \omega_{is} \leq C$,
for $1 \leq i \leq q$.

Now $P$ projects onto $S$, so we can express the normalized volume of $P$ as an integral
\[
\Vol(P) = n! \cdot \int_S \vol \big( \pi^{-1}(\lambda) \cap P \big) \, d \lambda,
\]
where $\vol$ is the ordinary euclidean volume.  The volume of $\pi^{-1}(\lambda) \cap P$ is bounded by the product of its coordinate widths, which is $\prod_{i=1}^s(\lambda_0 \omega_{i0} + \cdots + \lambda_s \omega_{is})$.  Expanding the product and substituting into the integral above gives
\begin{equation} \label{expanded inequality}
\Vol(P) \  \leq n! \cdot \sum_{j_1, \ldots, j_q} \bigg( \omega_{1j_1} \cdots \omega_{qj_q} \int_S \lambda_{j_1} \cdots \lambda_{j_q} \, d \lambda \bigg),
\end{equation}
where the sum is over  $(j_1, \ldots, j_q) \in \{0, \ldots, s \}^q$.  Now it follows from H\"older's inequality that the integral over $S$ of the monomial $\lambda_{j_1} \cdots \lambda_{j_q}$ is bounded above by the integral of $\lambda_1^q$, and a straightforward induction shows that
\[
\int_S \lambda_1^q \, d \lambda = q!/n!.
\]
Substituting into (\ref{expanded inequality}) then gives
\[
\Vol(P) \ \leq  \ q! \cdot \sum_{j_1, \ldots, j_q}(\omega_{1j_1} \cdots \omega_{qj_q}).
\]
The sum on the right hand side may be written as $\prod_{i=1}^q (\omega_{i0} + \cdots + \omega_{is})$, which is bounded above by $C^q$ since, for each $i$, $\omega_{i0} + \cdots + \omega_{is}$ is less than or equal to $C$.  We conclude that $\Vol(P)$ is bounded above by $q! \cdot C^q$.  Now 
the theorem follows, since $q \leq N$.
\end{proof}

\section{Appendix: Adjunction theory for toric varieties}

Roughly speaking, adjunction theory studies polarized varieties $(X,L)$, where $X$ is a smooth $n$-dimensional complex projective variety and $L$ is an ample line bundle on $X$, with special attention to the positivity properties of the adjoint bundle $K_X + t L$ for positive integers $t$.  A prototypical result is Fujita's observation, based on Mori's Cone Theorem, that $K_X + (n+1)L$ is always nef.  In other words, the degree of the restriction of $K_X + (n+1)L$ to any curve is nonnegative.  Moreover, Fujita showed that if $K_X + nL$ is not nef then $(X,L)$ is  isomorphic to $(\P^n, \O(1))$, and he classified those polarized varieties such that $K_X + (n -d)L$ is not nef for $d \leq 4$ \cite{Fujita87}.  For an overview of adjunction theory, including refinements of these results where $X$ is allowed to have mild singularities and $r$ may be a rational number, and for further references, see \cite{BeltramettiSommese95}.

In terms of adjunction theory, Theorem~\ref{Cayley} may be interpreted as follows.

\begin{theorem} \label{toric}
Let $(X,L)$ be a polarized toric variety.  Suppose $K_X + (n-d)L$ has no nonzero global sections.  Then there is a proper birational toric morphism $\pi: X' \rightarrow X$, where $X'$ is the projectivization of a sum of line bundles on a toric variety of dimension at most $(d^2 + 19d - 4)/2$ and $\pi^*L$ is isomorphic to $\O(1)$.
\end{theorem}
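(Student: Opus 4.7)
The plan is to translate Theorem~\ref{Cayley} through the standard toric dictionary. Let $P$ be the lattice polytope associated to $(X,L)$, so $\dim P = n$. Interior lattice points of $tP$ correspond to global sections of $K_X + tL$, so the hypothesis that $K_X + (n-d)L$ has no nonzero global sections means that $(n-d)P$ has no interior lattice points. Hence $\deg h^*_P \leq d$, and Theorem~\ref{Cayley} applies to produce a Cayley decomposition $P \cong P_0 * \cdots * P_s$ for lattice polytopes $P_i \subset \R^q$ with $q \leq (d^2 + 19d - 4)/2$.

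Next I would realize this decomposition geometrically. Let $\Sigma$ be a common refinement of the normal fans $\Sigma_{P_0}, \ldots, \Sigma_{P_s}$, and let $Y$ be the corresponding projective toric variety of dimension $q$. Each $P_i$ determines a nef line bundle $L_i$ on $Y$, obtained as the pullback along $Y \to X_{P_i}$ of the ample line bundle with polytope $P_i$. Set
$$X' \;=\; \P_Y(L_0 \oplus L_1 \oplus \cdots \oplus L_s),$$
a projective toric variety of dimension $q + s = n$ of precisely the form demanded by the conclusion. By the standard fan description of a split projective bundle, the polytope of $\O_{X'}(1)$ is the Cayley sum $P_0 * \cdots * P_s = P$.

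It remains to exhibit a proper birational toric morphism $\pi : X' \to X$ with $\pi^* L \cong \O_{X'}(1)$. Both $X$ and $X'$ are complete toric varieties in the lattice $\Z^q \times \Z^s$ whose defining $T$-invariant polarizations share the polytope $P$. The fan of $X'$ refines the normal fan $\Sigma_P$ of $P$, because the projective-bundle fan over $\Sigma$ is a simultaneous refinement of all $\Sigma_{P_i}$ lifted to $\R^q \times \R^s$ together with the standard fan of $\P^s$ in the fiber direction. This refinement induces the required morphism $\pi$, which is the identity on the dense torus and therefore birational; properness follows from completeness of $X'$. Finally, $\pi^* L$ and $\O_{X'}(1)$ are both torus-invariant line bundles with polytope $P$, hence isomorphic.

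The main obstacle is the fan-level compatibility verifying that the fan of $X' = \P_Y(\bigoplus L_i)$ refines $\Sigma_P$. This is essentially bookkeeping through the bijection between $T$-Cartier divisors and piecewise linear support functions, but one must take $\Sigma$ to refine every $\Sigma_{P_i}$ simultaneously so that the $L_i$ all live on the same base $Y$ and the projective-bundle fan is unambiguously defined; otherwise the argument reduces to a direct appeal to Theorem~\ref{Cayley} together with standard facts about toric projective bundles.
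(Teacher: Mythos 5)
Your proposal is correct and takes essentially the same route as the paper: translate the hypothesis into $\deg h^*_P \leq d$, invoke Theorem~\ref{Cayley} to get a Cayley decomposition $P \cong P_0 * \cdots * P_s$ in $\R^q$, and realize $X'$ as $\P_Y(L_0 \oplus \cdots \oplus L_s)$ over the base $Y$ whose fan is the common refinement of the normal fans of the $P_i$ (equivalently, the normal fan of the Minkowski sum $P_0 + \cdots + P_s$, which is how the paper phrases it). Your extra discussion of fan-level refinement and of $\pi^*L$ versus $\O_{X'}(1)$ is just filling in the ``standard toric dictionary'' steps that the paper leaves implicit.
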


\begin{proof}
Suppose $(X,L)$ is a polarized toric variety and $K_X + (n-d)L$ has no nonzero sections.  Then $L$ corresponds to an $n$-dimensional lattice polytope $P$ such that $h^*_P$ has degree at most $d$.  By Theorem~\ref{Cayley}, $P$ has a Cayley decomposition
\[
P \cong P_0 * \cdots * P_s,
\]
for some lattice polytopes $P_0, \ldots, P_s$ in $\R^q$, with $q \leq (d^2 + 19d-4)/2$.  Let $Y$ be the toric variety associated to the Minkowski sum $P_0 + \cdots + P_s$, and let $L_i$ be the line bundle on $Y$ corresponding to $P_i$.  Then $P$ is the polytope associated to $\O(1)$ on the toric variety
\[
X' \cong \P_Y(L_0 \oplus \cdots \oplus L_s).
\]
It follows that there is a proper birational toric morphism $\pi: X' \rightarrow X$ with $\pi^*L \cong \O(1)$, as required.
\end{proof}

We hope that the methods in this paper may lead to solutions of the toric cases of open problems in adjunction theory, such as \cite[Conjecture~7.1.8]{BeltramettiSommese95}, and that results such as Theorem~\ref{toric} may help shed light on what can be expected for adjunction theory of higher dimensional varieties in general.

\bibliography{math}

\providecommand{\bysame}{\leavevmode\hbox to3em{\hrulefill}\thinspace}
\providecommand{\MR}{\relax\ifhmode\unskip\space\fi MR }
\providecommand{\MRhref}[2]{%
  \href{http://www.ams.org/mathscinet-getitem?mr=#1}{#2}
}
\providecommand{\href}[2]{#2}
\begin{thebibliography}{99}

\bibitem{Batyrev07} V.V.~Batyrev, \emph{Lattice polytopes with a given $h^*$-polynomial}, 
in: C.A.~Athanasiadis (ed.) et al., \emph{Algebraic and geometric combinatorics}, 
Proceedings of a Euroconference in Mathematics, Anogia, Crete, Greece, August 20--26, 2005, 
AMS, Contemp. Math. \textbf{423}, 2007, pp.~1--10.

\bibitem{BatyrevBorisov97} V.V.~Batyrev and L.A.~Borisov, 
\emph{Dual cones and mirror symmetry for generalized Calabi-Yau manifolds}, 
in: B.~Greene (ed.) et al., \emph{Mirror symmetry II}, Cambridge, MA: International 
Press, AMS/IP Stud. Adv. Math. \textbf{1}, 1997, pp.~71--86.

\bibitem{BatyrevNill07} V.V.~Batyrev and B.~Nill, 
\emph{Multiples of lattice polytopes without interior lattice points}, 
Moscow Math. J. \textbf{7} (2007), 195--207.

\bibitem{BatyrevNill08} 
V.V.~Batyrev and B.~Nill, \emph{Combinatorial aspects of mirror symmetry}, 
in: M.~Beck (ed.) et al., \emph{Integer Points in Polyhedra}, 
Proceedings of an AMS-IMS-SIAM Joint Summer Research Conference, Snowbird, Utah, June 2006, 
AMS, Contemp. Math. \textbf{452}, 2008, pp.~35--66.

\bibitem{BeckRobins07}
M.~Beck and S.~Robins, \emph{Computing the continuous discretely: Integer point
  enumeration in polyhedra}, Undergraduate Texts in Mathematics,
  Springer-Verlag, 2007.

\bibitem{BeltramettiSommese95}
M.~Beltrametti and A.~Sommese, \emph{The adjunction theory of complex
  projective varieties}, de Gruyter Expositions in Mathematics, vol.~16, Walter
  de Gruyter \& Co., Berlin, 1995.

\bibitem{Fujita87}
T.~Fujita, \emph{On polarized manifolds whose adjoint bundles are not
  semipositive}, Algebraic geometry, Sendai, 1985, Adv. Stud. Pure Math.,
  vol.~10, North-Holland, Amsterdam, 1987, pp.~167--178.

\bibitem{Hensley83}
D.~Hensley, \emph{Lattice vertex polytopes with interior lattice points},
  Pacific J. Math. \textbf{105} (1983), no.~1, 183--191.

\bibitem{LagariasZiegler91}
J.~Lagarias and G.~Ziegler, \emph{Bounds for lattice polytopes containing a
  fixed number of interior points in a sublattice}, Canad. J. Math. \textbf{43}
  (1991), no.~5, 1022--1035.

\bibitem{PAG1}
R.~Lazarsfeld, \emph{Positivity in algebraic geometry. {I}}, Ergebnisse der
  Mathematik und ihrer Grenzgebiete. 3. Folge. A Series of Modern Surveys in
  Mathematics, vol.~48, Springer-Verlag, Berlin, 2004.

\bibitem{Nill07b}
B.~Nill, \emph{Lattice polytopes having $h^*$-polynomials with given degree and
  linear coefficient}, preprint, arXiv:0705.1082v2, 2007.

\bibitem{Pikhurko01}
O.~Pikhurko, \emph{Lattice points in lattice polytopes},
  Mathematika \textbf{48} (2001), no.~1--2, 15--24.
\end{thebibliography}
\bibliographystyle{amsplain}

\end{document}